 \patchcmd\Gread@eps{\@inputcheck#1 }{\@inputcheck"#1"\relax}{}{}
\newtheorem{theorem}{Theorem}[section]
\newtheorem{lemma}[theorem]{Lemma}
\newtheorem{proposition}[theorem]{Proposition}
\newtheorem{remark}[theorem]{Remark}
\newtheorem{assumption}{Assumption}[section]
\definecolor{light-gray}{gray}{0.95}
\def\centerarc[#1](#2)(#3:#4:#5){\draw[#1] ($(#2)+({#5*cos(#3)},{#5*sin(#3)})$) arc (#3:#4:#5);}
\newcommand{\vertiii}[1]{{\left\vert\kern-0.25ex\left\vert\kern-0.25ex\left\vert #1 
    \right\vert\kern-0.25ex\right\vert\kern-0.25ex\right\vert}}
\numberwithin{equation}{section}
\numberwithin{figure}{section}
\newcommand{\<}{\big\langle}
\renewcommand{\>}{\big\rangle}
\renewcommand{\epsilon}{\varepsilon}
\newcommand{\R}{\mathbb R}
\newcommand{\Z}{\mathbb Z}
\newcommand{\N}{\mathbb N}
\renewcommand{\P}{\mathbb P}
\newcommand{\T}{\mathbb T}
\newcommand{\E}{\mathbb E}
\title{Moderate deviation principles for a reaction diffusion model in non-equilibrium}
\author{Linjie Zhao}
\address{(Linjie Zhao) School of Mathematics and Statistics, and Hubei Key Laboratory of Engineering Modeling and Scientific Computing, Huazhong University of Science and Technology, Wuhan 430074, China.}
\email{linjie\_zhao@hust.edu.cn}
\thanks{\textbf{Acknowledgments.} The project is supported by the Fundamental Research Funds for the Central Universities in China, and by the National Natural Science Foundation of China with grant numbers 11971038 and 12371142.}
\keywords{hydrodynamic limits; moderate deviations; non-equilibrium; reaction diffusion model.}
\begin{document}

\maketitle

\begin{abstract}
We study moderate deviations  from hydrodynamic limits of a reaction diffusion model.  The process is defined as the superposition of the symmetric exclusion process with a Glauber dynamics. When the process starts from a product measure with a constant density, which is a non-equilibrium measure for the process, we prove that the re-scaled density fluctuation field satisfies the moderate deviation principle. Our proof relies on the  so-called main lemma developed by Jara and Menezes \cite{jara2018non,jaram20nonequilireaction}.  
\end{abstract}

\section{Introduction}

The theory of hydrodynamic limits concerns about deriving macroscopic laws directly from microscopic dynamics.  Fluctuations and large deviations from hydrodynamic limits have been investigated intensively, see \cite{klscaling}.  The regime of moderate deviations is between fluctuations and large deviations, and as far as we know, moderate deviations from hydrodynamic limits was only proved for a few models.  Gao and Quastel in  \cite{gao2003moderate} first  investigated moderate deviations from hydrodynamic limits  of the symmetric simple exclusion process (SSEP).  Since then, moderate deviation principles (MDP) from hydrodynamic limits was extended to  other interacting particle systems, including a special Ginzburg-Landau model \cite{wang2006moderate}, the SSEP with a slow bond  in one dimension \cite{xue2021moderate}, and the weakly asymmetric  simple exclusion process \cite{zhao2024moderate}.  The main difficulty is to prove a super-exponential version of the Boltzmann-Gibbs principle. Formally speaking, the Boltzmann-Gibbs principle states that one could replace the non-conserved quantities of the dynamics by its density fluctuation field under the correct time scaling \cite[Chapter 11]{klscaling};  while when considering moderate deviations, one needs to prove such replacement is super-exponentially small.

We underline that  the above results are all concentrated on the stationary case, \emph{i.e.}, when the process starts from its stationary measure.   It  remains challenging to prove MDP from hydrodynamic limits  when the process is not stationary as one needs to prove a non-equilibrium version of the super-exponential  Boltzmann-Gibbs principle. As far as we know, non-equilibrium MDP from hydrodynamic limits was only proved for the SSEP in one dimension \cite{xue2024nonequilibrium} very recently.  The proof uses large deviation estimates for the SSEP. Moreover, since the SSEP is linear,  one does not need to prove the super-exponential  Boltzmann-Gibbs principle.

The aim of this paper is to investigate non-equilibrium moderate deviations  for other interacting particle systems.  Precisely, we study the reaction diffusion model in \cite{gonccalves2024clt}. The process is composed of the SSEP dynamics and a Glauber dynamics.  In the SSEP, particles perform symmetric random walks on the discrete torus subject to the constraint that there is at most one particle at each site.  In the Glauber dynamics, particles are created or destroyed at each site according to some rate.  It was proved in \cite{de1986reaction} that the hydrodynamic limits of this model are governed by the reaction diffusion equation. Non-equilibrium fluctuations were proved in \cite{jaram20nonequilireaction} by using the improved relative entropy method. For dynamical large deviations of this model, we refer the readers to \cite{landim2018hydrostatics,jona1993large}.  In this article, we proved moderate deviation principles for the rescaled density fluctuation field when the process starts from a special product measure, which is not invariant for the dynamics, see Theorem \ref{thm mdp}. 

As in the theory of large and moderate deviations,  for the upper bound, we investigate a super-exponential martingale related to the process, and for the lower bound, we need to study hydrodynamic limits for a perturbed dynamics, see \cite{klscaling} for example.  Compared to the proof in \cite{xue2024nonequilibrium}, we do need to prove the non-equilibrium super-exponential version of the  Boltzmann-Gibbs principle, see Lemmas \ref{lem: super-esponential estimate 1} and \ref{lem: super-esponential estimate 2}, which is the main novelty of the paper. We prove them directly by using the so-called main lemma in \cite{gonccalves2024clt},  which is crucial in the improved relative entropy method developed by Jara and Menezes \cite{jara2018non,jaram20nonequilireaction}.

Since the process is irreducible, it has a unique invariant measure, which is called non-equilibrium stationary state (NESS) in the literature.  Fluctuations and large deviations for the NESS were studied in \cite{gonccalves2024clt,farfan2019static}. We leave it as a future work to study the MDP for the NESS.

The rest of the paper is organized as follows.  In Section \ref{sec: model} we state the model and results rigorously. The MDP upper and lower bound are proved respectively in Sections \ref{sec: upper bound} and \ref{sec: lower bound}.

\subsection{Notation} We denote $\mathbb{N} = \{1,2,\ldots\}$. For $x,y \in \Z^d$ or $\T_n^d$, write $x \sim y$ if $\sum_{i=1}^d |x_i - y_i| = 1$.
For a probability measure $\mu$ on some space, we denote by $E_\mu [\,\cdot\,]$ the expectation with respect to the measure $\mu$.  For two sequences of positive real numbers $\{a_n\}_{n \geq 1}$ and $\{b_n\}_{n \geq 1}$, write $a_n \ll b_n$ or $a_n = o(b_n)$ if $\lim_{n \rightarrow \infty} a_n / b_n = 0.$

\section{Model and results}\label{sec: model}

The state space of the model is $\Omega_n := \{0,1\}^{\T_n^d}$, where $\T_n^d := \Z^d / (n \Z^d)$ is the $d$-dimensional discrete torus with size $n \in \N$.  The generator of the process $\eta (t) \equiv \eta^n (t)$ is $L_n = n^2 L_n^{\rm ex} + L_n^{\rm r}$, where for any function $f: \Omega_n \rightarrow \R$,
\begin{align*}
	L_n^{\rm ex} f (\eta) &= \sum_{x \in \T_n^d} \sum_{i=1}^d [f(\eta^{x,x+e_i}) - f(\eta)],\\
	L_n^{\rm r} f (\eta) &= \sum_{x \in \T_n^d} c_x (\eta) [f(\eta^{x}) - f(\eta)].
\end{align*}
Above, for $x,y \in \T^d_n$, $\eta^{x,y}$ is the configuration obtained from $\eta$ by swapping the values of $\eta_x$ and $\eta_y$,  and $\eta^x$ is the one by flipping the value of $\eta_x$, \emph{i.e.},
\[\eta^{x,y}_z = \begin{cases}
	\eta_x, \quad &\text{if } z=y,\\ 
	\eta_y, \quad &\text{if } z=x,\\
	\eta_z, \quad &\text{otherwise.}    
\end{cases} \qquad \eta^{x}_z = \begin{cases}
1- \eta_x, \quad &\text{if } z=x,\\ 
\eta_z, \quad &\text{otherwise.}    
\end{cases}\]
The flipping rate $c_x (\eta)$ is defined as
\[c_x (\eta) = \Big(a + \frac{\lambda}{2d} \sum_{y \sim x} \eta_y  \Big) (1-\eta_x) + b \eta_x,\]
where $a,b > 0$ and $\lambda > -a$ are given parameters.  

For $\rho \in (0,1)$, let $\nu^n_\rho$ be the product measure on $\Omega_n$ with constant particle density $\rho$,
\[\nu^n_\rho (\eta_x = 1, \, \forall x \in A) = \rho^{|A|}, \quad \forall A \subset \T_n^d.\]
Direct calculations show that
\[F(\rho) := E_{\nu^n_\rho} [c_x (\eta) (1-2\eta_x)] = (a+\lambda \rho) (1-\rho) - b \rho.\]
Since $F$ is a quadratic polynomial such that $F(0) = a > 0, F(1) = -b < 0$,  there exists a unique point $\rho_* \in (0,1)$ such that $F(\rho_*) = 0$.

Let $\mathcal{D} (\T^d)$ be the space of smooth functions on $\T^d$ and  let $\mathcal{D}^\prime (\T^d)$ be its topological dual. Fix a time horizon $T > 0$.  We equip the space $D([0,T], \mathcal{D}^\prime (\T^d))$, the space of Càdlàg $\mathcal{D}^\prime (\T^d)$-valued trajectories on $[0,T]$, with the uniform weak topology: a sequence $\{\mu^n_\cdot\}_{n \geq 1}$  converges to $\mu_\cdot$ in $D([0,T], \mathcal{D}^\prime (\T^d))$ if and only if for any $H \in \mathcal{D} (\T^d)$, 
\[\lim_{n \rightarrow \infty} \sup_{0 \leq t \leq T} |\<\mu^n_t,H\> - \<\mu_t,H\>| = 0.\]

Assume the initial measure of the process is $\nu^n_{\rho_*}$. We are interested in the rescaled density fluctuation field $\mu^n_t \in \mathcal{D}^\prime (\T^d)$ of the process, which acts on $H \in \mathcal{D} (\T^d)$ as
\[\<\mu^n_t,H\> = \frac{1}{a_n} \sum_{x \in \T_n^d} \bar{\eta}_x (t) H(\tfrac{x}{n}),\]
where $\bar{\eta}_x = \eta_x - \rho_*$, and $\{a_n\}_{n \geq }$ is a sequence of positive real numbers.  Define
\[g_d (n) = \begin{cases}
	n, \quad &\text{if } d=1,\\
	\log	n, \quad &\text{if } d=2,\\
	1, \quad &\text{if } d \geq 3.
\end{cases}\]
We shall need the following assumptions on $a_n$.

\begin{assumption}\label{assump: a n}
\[n^{d-1} \sqrt{g_d (n)} \ll a_n \ll n^d.\]
\end{assumption}

\begin{remark}
	In the regime of moderate deviations, one usually assumes $n^{d/2} \ll a_n \ll n^d$. So, the above assumption is only optimal in dimension one. 
\end{remark}

Next, we introduce the MDP rate function.  For $\mu_0 \in \mathcal{D}^\prime (\T^d)$, the rate function corresponding to the initial distribution is defined as
\begin{equation}\label{q 0}
\mathcal{Q}_0 (\mu_0) := \sup_{\phi \in \mathcal{D} (\T^d)} \Big\{ \<\mu_0,\phi\> - \frac{\chi(\rho_*)}{2} \|\phi\|_{L^2 (\T^d)}^2 \Big\},
\end{equation}
where $\chi (\rho_*) = \rho_* (1-\rho_*)$. Let $C^{1,\infty} ([0,T] \times \T^d)$ be the space of functions on $[0,T] \times \T^d$ which are continuously derivative in the time variable and smooth in the space variable. For $\mu \in D([0,T], \mathcal{D}^\prime (\T^d))$ and $H \in C^{1,\infty} ([0,T] \times \T^d)$, define
\[\ell_T (\mu,H) := \<\mu_T,H_T\> - \<\mu_0,H_0\> - \int_0^T \<\mu_s, (\partial_s+\Delta +F^\prime (\rho_*)) H_s\> ds.\]
By simple calculations, $F^\prime (\rho_*) = \lambda -a - b - 2\lambda \rho_*$.
The rate function corresponding to the dynamics of the process  is defined as
\[\mathcal{Q}_{\rm dyn} (\mu) := \sup_{H \in C^{1,\infty} ([0,T] \times \T^d)} \Big\{ \ell_T (\mu,H)  -  \chi (\rho_*) \sum_{i=1}^d \|\partial_{u_i} H\|_{L^2 ([0,T] \times \T^d)}^2 - \frac{G(\rho_*)}{2} \|H\|_{L^2 ([0,T] \times \T^d)}^2\Big\},\]
where $G(\rho) := E_{\nu^n_\rho} [c_x (\eta) ] = (a-\lambda \rho) (1-\rho) + b \rho$. Finally, the rate function is defined as \[\mathcal{Q}_T := \mathcal{Q}_0 + \mathcal{Q}_{\rm dyn}.\]

Denote by $\P^n_{\rho_*}$ the probability measure on $D([0,T],\Omega_n)$ induced by the process $(\eta(t))_{t \geq 0}$ with initial distribution $\nu^n_{\rho_*}$, and by $\E^n_{\rho_*}$ the corresponding expectation.

Below is the main result of the article.

\begin{theorem}\label{thm mdp}
Assume Assumption \ref{assump: a n} holds.  There exists $\lambda_c > 0$ such that for any $\lambda \in [-\lambda_c,\lambda_c]$, under $\P^n_{\rho_*}$, the sequence of measures $\{\mu^n_t, 0 \leq t \leq T\}_{n \geq 1}$ satisfies the moderate deviation principles with decay rate $a_n^2 / n^d$ and with rate function $\mathcal{Q}_T$. Precisely speaking, for any closed set $C$ and any open set $O$ in $D([0,T], \mathcal{D}^\prime (\T^d))$,
\begin{align*}
	\limsup_{n \rightarrow \infty} \frac{n^d}{a_n^2} \log \P^n_{\rho_*} \Big( \{\mu^n_t, 0 \leq t \leq T\}_{n \geq 1} \in C\Big) \leq - \inf_{\mu \in C} \mathcal{Q}_T (\mu),\\
		\liminf_{n \rightarrow \infty} \frac{n^d}{a_n^2} \log \P^n_{\rho_*} \Big( \{\mu^n_t, 0 \leq t \leq T\}_{n \geq 1} \in O\Big) \geq - \inf_{\mu \in O} \mathcal{Q}_T (\mu).
\end{align*}
\end{theorem}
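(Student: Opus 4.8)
The plan is to run the classical exponential-martingale argument for dynamical large deviations, \cite[Chapter 10]{klscaling}, in the moderate window, the genuinely new ingredient being the non-equilibrium super-exponential Boltzmann--Gibbs principle. Set $\theta_n := a_n/n^d$, so that $\theta_n \to 0$ and the decay rate is $a_n^2/n^d = \theta_n^2 n^d$. For $H \in C^{1,\infty}([0,T]\times\T^d)$ write $\Psi^n_s := \sum_x \bar\eta_x(s)\, H_s(\tfrac{x}{n})$, so that $\theta_n \Psi^n_s = \tfrac{a_n^2}{n^d}\<\mu^n_s, H_s\>$; by Dynkin's formula
\[
\mathcal{M}^{n,H}_t := \exp\Big\{\theta_n \Psi^n_t - \theta_n \Psi^n_0 - \theta_n \int_0^t \sum_x \bar\eta_x(s)\, \partial_s H_s(\tfrac{x}{n})\, ds - \int_0^t \Gamma^n_s\, ds\Big\}, \qquad \Gamma^n_s := e^{-\theta_n \Psi^n_s}\, L_n\, e^{\theta_n \Psi^n_s},
\]
is a mean-one positive $\P^n_{\rho_*}$-martingale. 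The first task is to expand $\Gamma^n_s$ in powers of $\theta_n$. Since $n^2 L_n^{\rm ex}$ moves $\Psi^n_s$ by $O(1/n)$ across each bond, its $O(\theta_n)$ part gives, after a discrete summation by parts, $\tfrac{a_n^2}{n^d}\<\mu^n_s, \Delta H_s\>$ up to vanishing corrections, while its $O(\theta_n^2)$ part gives $\tfrac12\theta_n^2\sum_x\sum_{i=1}^d (\eta_{x+e_i}(s)-\eta_x(s))^2 (\partial_{u_i}H_s)^2(\tfrac{x}{n})(1+o(1))$. Since a flip of site $x$ moves $\Psi^n_s$ by exactly $(1-2\eta_x)H_s(\tfrac{x}{n})$, the reaction part contributes $\theta_n\sum_x c_x(\eta(s))(1-2\eta_x(s))H_s(\tfrac{x}{n})$ at first order and $\tfrac12\theta_n^2\sum_x c_x(\eta(s))H_s^2(\tfrac{x}{n})$ at second order, using $(1-2\eta_x)^2\equiv 1$. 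Here the choice $\rho=\rho_*$ is used: expanding the polynomial $c_x(\eta)(1-2\eta_x)$ in $\bar\eta$, which then has no constant term (because $F(\rho_*)=0$) and no cubic term, one finds that $\sum_x H_s(\tfrac{x}{n})\,c_x(\eta)(1-2\eta_x)$ equals $F'(\rho_*)\sum_x H_s(\tfrac{x}{n})\bar\eta_x$, up to negligible discrete-Laplacian corrections, plus the purely quadratic remainder $-\tfrac{\lambda}{2d}\sum_x H_s(\tfrac{x}{n})\bar\eta_x\sum_{y\sim x}\bar\eta_y$. All contributions of order $\theta_n^3$ or smaller are $o(\theta_n^2 n^d)=o(a_n^2/n^d)$ and are dropped.

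\smallskip\noindent\textbf{Replacement step.} Modulo errors that are super-exponentially small at rate $a_n^2/n^d$, one replaces $(\eta_{x+e_i}-\eta_x)^2$ by $2\chi(\rho_*)$ and $c_x(\eta)$ by $G(\rho_*):=E_{\nu^n_{\rho_*}}[c_x(\eta)]$ in the second-order terms --- standard one-block replacements, for which the extra prefactor $\theta_n^2$ leaves ample room --- and replaces the quadratic remainder $\bar\eta_x\sum_{y\sim x}\bar\eta_y$ by $0$ in the first-order reaction term. This last replacement is precisely Lemmas \ref{lem: super-esponential estimate 1} and \ref{lem: super-esponential estimate 2}, the non-equilibrium super-exponential Boltzmann--Gibbs principle; it is essential here that $\rho_*$ is the zero of $F$, for the local function $\bar\eta_0\sum_{y\sim 0}\bar\eta_y$ has $\nu^n_\rho$-expectation $2d(\rho-\rho_*)^2$, whose value \emph{and first derivative} both vanish at $\rho=\rho_*$, so the Boltzmann--Gibbs replacement of the quadratic reaction term is by $0$ --- not by a multiple of the density fluctuation field --- which is exactly why $\mathcal{Q}_{\rm dyn}$ carries no quadratic-in-$\mu$ term. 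After the replacements,
\[
\int_0^T\!\Gamma^n_s\,ds = \frac{a_n^2}{n^d}\Big[\int_0^T\!\<\mu^n_s,(\Delta+F'(\rho_*))H_s\>\,ds + \chi(\rho_*)\sum_{i=1}^d\|\partial_{u_i}H\|_{L^2([0,T]\times\T^d)}^2 + \frac{G(\rho_*)}{2}\|H\|_{L^2([0,T]\times\T^d)}^2\Big] + R^n_H,
\]
where $\P^n_{\rho_*}(|R^n_H|>\delta a_n^2/n^d)\le e^{-C a_n^2/n^d}$ for every $C,\delta>0$ and all large $n$; equivalently $\mathcal{M}^{n,H}_T = \exp\{\tfrac{a_n^2}{n^d}(\ell_T(\mu^n,H) - \chi(\rho_*)\sum_i\|\partial_{u_i}H\|_{L^2}^2 - \tfrac{G(\rho_*)}{2}\|H\|_{L^2}^2) - R^n_H\}$.

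\smallskip\noindent\textbf{Upper bound.} I would combine $\mathcal{M}^{n,H}_T$ with a tilt of the initial measure, $d\nu^{n,\phi}_{\rho_*}\propto e^{\theta_n\sum_x\bar\eta_x\phi(x/n)}\,d\nu^n_{\rho_*}$ for $\phi\in\mathcal{D}(\T^d)$; since $\nu^n_{\rho_*}$ is a product measure with Bernoulli$(\rho_*)$ marginals, its normalisation obeys $\tfrac{n^d}{a_n^2}\log E_{\nu^n_{\rho_*}}[e^{\theta_n\sum_x\bar\eta_x\phi(x/n)}]\to \tfrac{\chi(\rho_*)}{2}\|\phi\|_{L^2(\T^d)}^2$. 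The exponential Chebyshev inequality applied to $\mathcal{M}^{n,H}_T$ under $\nu^{n,\phi}_{\rho_*}$, together with the super-exponential control of $R^n_H$ and a change of measure back to $\P^n_{\rho_*}$, then gives for every closed $C$, every $\phi$ and every $H$
\[
\limsup_n \tfrac{n^d}{a_n^2}\log\P^n_{\rho_*}(\mu^n\in C) \le -\inf_{\mu\in C}\Big\{\<\mu_0,\phi\>-\tfrac{\chi(\rho_*)}{2}\|\phi\|_{L^2(\T^d)}^2 + \ell_T(\mu,H) - \chi(\rho_*)\sum_i\|\partial_{u_i}H\|_{L^2}^2 - \tfrac{G(\rho_*)}{2}\|H\|_{L^2}^2\Big\}.
\]
It then remains to establish exponential tightness of $\{\mu^n\}$ in $D([0,T],\mathcal{D}^\prime(\T^d))$ at rate $a_n^2/n^d$ --- the exponential moment bounds supplied by the martingales $\mathcal{M}^{n,H}$, together with a Cram\'er-type bound on $\mu^n_0$, are enough --- to reduce to compact sets, and to exchange $\inf_{\mu\in C}$ with $\sup_\phi\sup_H$ by a minimax argument; since $\sup_\phi\{\<\mu_0,\phi\>-\tfrac{\chi(\rho_*)}{2}\|\phi\|_{L^2(\T^d)}^2\}=\mathcal{Q}_0(\mu_0)$ and $\sup_H\{\ell_T(\mu,H)-\chi(\rho_*)\sum_i\|\partial_{u_i}H\|_{L^2}^2-\tfrac{G(\rho_*)}{2}\|H\|_{L^2}^2\}=\mathcal{Q}_{\rm dyn}(\mu)$, the bound becomes $-\inf_{\mu\in C}\mathcal{Q}_T(\mu)$.

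\smallskip\noindent\textbf{Lower bound, and the main obstacle.} For an open set $O$ it is enough to show, for each $\mu\in O$ with $\mathcal{Q}_T(\mu)<\infty$ lying in a $\mathcal{Q}_T$-dense class of trajectories (density being a routine approximation in which the maximising $\phi$ and $H$ become smooth by elliptic regularity), that $\liminf_n\tfrac{n^d}{a_n^2}\log\P^n_{\rho_*}(\mu^n\in O)\ge -\mathcal{Q}_T(\mu)$. I would do this by tilting both the initial law and the dynamics by the corresponding $\phi$ and $H$, producing a perturbed measure $\P^{n,\star}$ under which the process is a reaction diffusion dynamics with an $O(\theta_n)$ weak perturbation, whose rescaled density fluctuation field converges in probability to $\mu$ --- a law of large numbers for the perturbed dynamics, again resting on the replacement step --- while $\tfrac{n^d}{a_n^2}$ times the relative entropy of $\P^{n,\star}$ with respect to $\P^n_{\rho_*}$ converges to $\mathcal{Q}_T(\mu)$; the entropy inequality then gives the lower bound. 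The main obstacle, wherever it occurs, is the non-equilibrium super-exponential Boltzmann--Gibbs principle of the replacement step: out of equilibrium one cannot invoke the spectral-gap / Feynman--Kac estimates available in the stationary case, and one must run instead the improved relative entropy method of Jara and Menezes \cite{jara2018non,jaram20nonequilireaction}. The decisive point is that $\rho_*$ is a \emph{stationary} profile for the hydrodynamic equation ($\Delta\rho_*=0$ and $F(\rho_*)=0$), so that the reference product measure is $\nu^n_{\rho_*}$ at all times and the relative entropy of the law of $\eta(t)$ with respect to it starts at $0$ and stays of order $n^{d-2}g_d(n)$; inserting this bound into the main lemma of \cite{gonccalves2024clt} controls the relevant space--time averages, and Assumption \ref{assump: a n}, namely $n^{d-1}\sqrt{g_d(n)}\ll a_n$, is precisely what makes the resulting errors $o(a_n^2/n^d)$ on the exponential scale. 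The threshold $\lambda_c$ is inherited from the same estimates: the Glauber rates must be weak enough for the entropy bound to close against the Dirichlet form of the exclusion part.
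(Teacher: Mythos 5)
Your outline follows the paper's route essentially step for step: the same exponential martingale and Taylor expansion, the same identification of what must be replaced (in particular the observation that $F(\rho_*)=0$ kills the constant term of $c_x(\eta)(1-2\eta_x)$ and leaves the linear part $F'(\rho_*)\bar\eta_x$ plus the quadratic remainder $-\tfrac{\lambda}{2d}\bar\eta_x\sum_{y\sim x}\bar\eta_y$), the same tilt of the initial measure, exponential tightness and minimax for the upper bound, and the same perturbed dynamics / hydrodynamic limit / relative entropy computation for the lower bound. The roles of Assumption \ref{assump: a n} and of the smallness of $\lambda$ are correctly identified.

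The one place where your sketch misstates the mechanism --- and it is the paper's central technical point --- is the proof of the super-exponential estimates themselves. You propose to control the non-equilibrium error by noting that the relative entropy of the law of $\eta(t)$ with respect to $\nu^n_{\rho_*}$ stays of order $n^{d-2}g_d(n)$ and ``inserting this bound into the main lemma.'' An entropy bound on the law of $\eta(t)$ controls expectations via the entropy inequality, but it cannot by itself produce a statement of the form $\tfrac{n^d}{a_n^2}\log\P^n_{\rho_*}(\cdot)\to-\infty$: for that one must estimate the exponential moment $\E^n_{\rho_*}[\exp\{\tfrac{a_n^2}{n^d}(\cdots)\}]$ directly. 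The paper does this with the Feynman--Kac variational formula taken with respect to the \emph{non-invariant} measure $\nu^n_{\rho_*}$ (Lemma \ref{lem: feynman-kac formula}), in which the non-stationarity enters through the correction $\tfrac12 L_n^*\mathbf{1}=\tfrac{\lambda}{4d\rho_*}\sum_x\sum_{y\sim x}\bar\eta_x\bar\eta_y$; both this correction and the target degree-two field are then absorbed into the reaction Dirichlet form by the main lemma of \cite{gonccalves2024clt} (Proposition \ref{prop: main lemma}), and it is precisely the requirement $C_0\kappa(\rho_*)\mathcal{A}(\lambda/(2d\rho_*))<1$ that produces $\lambda_c$. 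A Garsia--Rodemich--Rumsey argument (Lemma \ref{lem: GRR inequality}) is also needed to move the supremum over time inside the exponential moment. The relative-entropy mechanism you describe is the one the paper uses only for the hydrodynamic limit of the perturbed dynamics in the lower bound (Propositions \ref{lem main lemma 2} and \ref{pro: relative entropy}), not for the super-exponential Boltzmann--Gibbs principle. With this correction your strategy coincides with the paper's.
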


\section{The upper bound}\label{sec: upper bound}

In Subsection \ref{subsec:exp martingale}, we introduce an exponential martingale related to the process and express it as a functional of the fluctuation field $\mu^n_t$ plus super-exponentially small error terms.   This is the main step of the upper bound. The main difficulty is to prove two super-exponential estimates, see Lemmas \ref{lem: super-esponential estimate 1} and \ref{lem: super-esponential estimate 2}.  We prove exponential tightness of the rescaled density fluctuation field in Subsection \ref{subsec exp tight}. Finally, the proof of the upper bound is concluded in Subsection \ref{subsec pf upper}.

\subsection{An exponential martingale}\label{subsec:exp martingale} For any $H \in C^{1,\infty} ([0,T] \times \T^d)$, by Feynman-Kac formula, 
\begin{equation*}
	\mathcal{M}^n_t (H) = \exp \Big\{  \frac{a_n^2}{n^d}  \<\mu^n_t,H_t\> - \frac{a_n^2}{n^d} \<\mu^n_0,H_0\> - \int_0^t e^{-\tfrac{a_n^2}{n^d} \langle\mu^n_s,H_s\rangle} \big(\partial_s + L_n\big) e^{\tfrac{a_n^2}{n^d} \langle\mu^n_s,H_s\rangle} ds \Big\}
\end{equation*} 
is an exponential martingale.  By direct calculations,
\begin{equation*}
	e^{-\tfrac{a_n^2}{n^d} \langle\mu^n_s,H_s\rangle} \partial_s  e^{\tfrac{a_n^2}{n^d} \langle\mu^n_s,H_s\rangle} = \frac{a_n^2}{n^d} \<\mu^n_s,\partial_s H_s\>,
\end{equation*}
and
\begin{align*}
	e^{-\tfrac{a_n^2}{n^d} \langle\mu^n_s,H_s\rangle} L_n  e^{\tfrac{a_n^2}{n^d} \langle\mu^n_s,H_s\rangle}  &=  n^2 \sum_{x \in \T_n^d} \sum_{i=1}^d \big[\exp \big\{ \tfrac{a_n}{n^d} \big(\eta_{x} (s) - \eta_{x+e_i} (s)\big) \big(H_s(\tfrac{x+e_i}{n}) - H_s (\tfrac{x}{n})\big)\big\} - 1\big] \\
	&+ \sum_{x \in \T_n^d}  c_x (\eta(s)) \big[\exp \big\{ \tfrac{a_n}{n^d} (1-2\eta_x(s)) H_s (\tfrac{x}{n})\big\} - 1 \big].
\end{align*}
By Taylor's expansion up to the second order, and using the  summation by parts formula,
\begin{multline*}
	e^{-\tfrac{a_n^2}{n^d} \langle\mu^n_s,H_s\rangle} L_n  e^{\tfrac{a_n^2}{n^d} \langle\mu^n_s,H_s\rangle}  =  \frac{a_n^2}{n^d} \Big\{ \<\mu^n_s,\Delta_n H_s\> + \frac{1}{2n^d} \sum_{x \in \T_n^d}  \sum_{i=1}^d \big(\eta_x (s) - \eta_{x+e_i} (s)\big)^2 \big( \nabla_{n,i} H_s \big)^2 (\tfrac{x}{n}) \\ + \frac{1}{a_n} \sum_{x \in \T_n^d} c_x (\eta(s)) (1-2\eta_x (s)) H_s (\tfrac{x}{n}) + \frac{1}{2n^d} \sum_{x \in \T_n^d} c_x (\eta(s)) H_s^2  (\tfrac{x}{n})+ R^{n,1} _s (H)
	 \Big\},
\end{multline*}
where $\Delta_n = \sum_{i=1}^{d} \Delta_{n,i}$ is the discrete Laplacian,  and
\[\Delta_{n,i} H_s (\tfrac{x}{n}) = n^2 \big( H_s (\tfrac{x+e_i}{n}) + H_s (\tfrac{x-e_i}{n}) - 2 H_s (\tfrac{x}{n})  \big), \quad  \nabla_{n,i} H_s (\tfrac{x}{n}) = n \big( H_s (\tfrac{x+e_i}{n}) -  H_s (\tfrac{x}{n})\big).\]
The error term satisfies \[|R^{n,1}_s (H)| \leq C(H) a_n / n^d.\] Since $F(\rho_\ast) = E_{\nu^n_{\rho_\ast}} \big[c_x (\eta) (1-2\eta_x)\big] = 0$, we have
\begin{align*}
c_x (\eta) (1-2\eta_x) &= \Big(a+\frac{\lambda}{2d} \sum_{y \sim x} \eta_y\Big) (1-\eta_x) - b \eta_x \\&= -(a+b+\lambda \rho_*) \bar{\eta}_x + \frac{\lambda (1-\rho_*)}{2d}  \sum_{y \sim x} \bar{\eta}_y 
- \frac{\lambda}{2d} \sum_{y \sim x} \bar{\eta}_y \bar{\eta}_x. 
\end{align*}
Thus, using the summation by parts formula, 
\begin{equation}\label{eqn 6}
\begin{aligned}
	&e^{-\tfrac{a_n^2}{n^d} \langle\mu^n_s,H_s\rangle} L_n  e^{\tfrac{a_n^2}{n^d} \langle\mu^n_s,H_s\rangle}  \\
	=&  \frac{a_n^2}{n^d} \Big\{ \<\mu^n_s,\big((1+\tfrac{\lambda(1-\rho_*)}{2dn^2})\Delta_n + F^\prime (\rho_*) \big) H_s\> 
	+ \frac{1}{2n^d} \sum_{x \in \T_n^d}  \sum_{i=1}^d \big(\eta_x (s) - \eta_{x+e_i} (s)\big)^2 \big( \nabla_{n,i} H_s \big)^2 (\tfrac{x}{n})  \\
	&- \frac{\lambda}{2da_n} \sum_{x \in \T_n^d} \sum_{y \sim x}  \bar{\eta}_{x} (s)\bar{\eta}_{y} (s) H_s (\tfrac{x}{n}) 
	+ \frac{1}{2n^d} \sum_{x \in \T_n^d} c_x (\eta(s)) H_s^2  (\tfrac{x}{n})+R^{n,1} _s (H)
	\Big\}.
\end{aligned}
\end{equation}

In order to further deal with the above terms,  we need the following two super-exponential estimates in Lemmas \ref{lem: super-esponential estimate 1} and \ref{lem: super-esponential estimate 2}.

\begin{lemma}[A super-exponential estimate for degree-one terms]\label{lem: super-esponential estimate 1}
	If $r_n \gg a_n$, then for any $H \in \mathcal{D} (\T^d)$ and for any $\varepsilon > 0$,
	\begin{equation}\label{superexp estimate 1}
	\lim_{n \rightarrow \infty}	\frac{n^d}{a_n^2} \log \P^n_{\rho_*} \Big( \sup_{0 \leq t \leq T} \Big| \int_0^t \frac{1}{r_n} \sum_{x \in \T_n^d} \bar{\eta}_x (s) H_s (\tfrac{x}{n}) ds\Big| > \varepsilon  \Big) = -\infty.
	\end{equation}
\end{lemma}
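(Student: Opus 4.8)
The plan is to combine an exponential Chebyshev inequality with a Dynkin (integration-by-parts-in-time) argument, reducing \eqref{superexp estimate 1} to estimates that are either soft — normalized sums of i.i.d.\ variables, martingale maximal inequalities, Taylor expansion — or that fall directly under the main lemma of \cite{gonccalves2024clt}. Two preliminary reductions: since $\{\sup_{0\le t\le T}|\int_0^t f(s)\,ds|>\varepsilon\}\subset\{\sup_t\int_0^t f>\varepsilon\}\cup\{\sup_t\int_0^t(-f)>\varepsilon\}$, and since the probability in \eqref{superexp estimate 1} is non-increasing in $r_n$, it suffices to bound $\P^n_{\rho_*}\big(\sup_{0\le t\le T}\int_0^t\frac1{r_n}\sum_x\bar\eta_x(s)H_s(\tfrac xn)\,ds>\varepsilon\big)$ under the additional assumption $a_n\ll r_n\le n^d$.

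Let $G\in C^{1,\infty}([0,T]\times\T^d)$ be the solution of the backward equation $\partial_s G_s+\Delta G_s+F'(\rho_*)G_s=-H_s$ with $G_T\equiv 0$; as $\Delta+F'(\rho_*)$ generates a bounded semigroup on $C^\infty(\T^d)$, the function $G$ and all of its space derivatives are bounded in terms of $H$. Apply Dynkin's formula to $\Phi^n_s(\eta):=\sum_x\bar\eta_x G_s(\tfrac xn)$. Using $n^2 L_n^{\rm ex}\bar\eta_x=\Delta_n\bar\eta_x$, summation by parts, and the identity $c_x(\eta)(1-2\eta_x)=F'(\rho_*)\bar\eta_x+\tfrac{\lambda(1-\rho_*)}{2dn^2}\Delta_n\bar\eta_x-\tfrac{\lambda}{2d}\sum_{y\sim x}\bar\eta_x\bar\eta_y$ (already obtained in the computation leading to \eqref{eqn 6}), one gets $(\partial_s+L_n)\Phi^n_s=-\sum_x\bar\eta_x H_s(\tfrac xn)-\tfrac{\lambda}{2d}Q^n_s+\sum_x\bar\eta_x\mathcal E^n_s(\tfrac xn)$, where $Q^n_s(\eta):=\sum_x\sum_{y\sim x}\bar\eta_x\bar\eta_y G_s(\tfrac xn)$ is a degree-two local function and $\mathcal E^n_s:=(\Delta_n-\Delta)G_s+\tfrac{\lambda(1-\rho_*)}{2dn^2}\Delta_n G_s$ satisfies $\|\mathcal E^n_s\|_\infty\le C(G)n^{-2}$. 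Therefore
\[\int_0^t\frac1{r_n}\sum_x\bar\eta_x(s)H_s(\tfrac xn)\,ds=\frac1{r_n}\big(\Phi^n_0-\Phi^n_t+M^n_t\big)-\frac{\lambda}{2d\,r_n}\int_0^t Q^n_s(\eta(s))\,ds+\frac1{r_n}\int_0^t\sum_x\bar\eta_x(s)\mathcal E^n_s(\tfrac xn)\,ds,\]
where $M^n_t$ is the associated Dynkin martingale; the last summand is bounded deterministically by $C(G)\,n^{d-2}T/r_n$, which tends to $0$ since $r_n\gg a_n\gg n^{d-2}$ by Assumption \ref{assump: a n}.

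It remains to control the four random terms on the right-hand side. For $\tfrac1{r_n}\Phi^n_0$: under $\nu^n_{\rho_*}$ this is a normalized sum of $n^d$ independent centered bounded variables, so Hoeffding's inequality bounds $\P^n_{\rho_*}(\tfrac1{r_n}|\Phi^n_0|>\delta)$ by $2\exp\{-c\delta^2 r_n^2/n^d\}$, and $\tfrac{n^d}{a_n^2}$ times the logarithm tends to $-\infty$ because $r_n\gg a_n$. For $\sup_t|M^n_t|$: the predictable quadratic variation of $M^n$ is of order $n^d$, and the exponential log-moment rate of its jumps is $\le C(G)\theta^2 n^d$ for $|\theta|$ bounded; the associated exponential supermartingale together with Doob's maximal inequality, optimized at $\theta$ of order $\delta r_n/n^d$ (admissible since $r_n\le n^d$), again yields a bound $C\exp\{-c(G)\delta^2 r_n^2/n^d\}$, hence $-\infty$ at speed $a_n^2/n^d$. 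The remaining two terms, $\tfrac1{r_n}\sup_t|\Phi^n_t|$ and $\tfrac1{r_n}\sup_t|\int_0^t Q^n_s(\eta(s))\,ds|$ — a degree-one functional of the single configuration $\eta(t)$ and the time integral of the degree-two function $Q^n_s$ — are the genuinely non-equilibrium contributions, since $\nu^n_{\rho_*}$ is not invariant for $L_n$. For these I would invoke the main lemma of \cite{gonccalves2024clt}, which provides the super-exponential control, at speed $a_n^2/n^d$, of exactly this kind of space-time quantity; the relevant scale there is $n^{d-2}g_d(n)$, which Assumption \ref{assump: a n} makes negligible against $a_n^2/n^d$. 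The supremum over $t$ sitting on $\Phi^n_t$ is then absorbed by combining this with Doob's inequality for the exponential martingale $\mathcal M^n_t$ of Subsection \ref{subsec:exp martingale}.

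The main obstacle is precisely this last step: obtaining super-exponential control, at the moderate-deviation speed $a_n^2/n^d$, of $\Phi^n_t=\sum_x\bar\eta_x(t)G_t(\tfrac xn)$ and of $\int_0^t Q^n_s(\eta(s))\,ds$ when no invariant reference measure is available, so that no elementary Feynman--Kac/Rayleigh--Ritz bound applies. This is exactly the situation for which the main lemma of \cite{gonccalves2024clt}, the engine of the improved relative entropy method of Jara and Menezes, is designed, and it is the reason Assumption \ref{assump: a n} is imposed in the sharp form $n^{d-1}\sqrt{g_d(n)}\ll a_n$; all the other ingredients of the argument are classical.
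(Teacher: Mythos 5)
Your Dynkin decomposition is a genuinely different route from the paper's, which keeps the time integral intact, removes the supremum over $t$ with the Garsia--Rodemich--Rumsey inequality, and then bounds the resulting exponential moments by the Feynman--Kac variational formula (Lemma \ref{lem: feynman-kac formula}), absorbing the degree-one term into the Glauber Dirichlet form via the flip map $\eta\mapsto\eta^x$ and Young's inequality, and controlling the non-equilibrium term $L_n^*\mathbf 1$ (a degree-two quantity) with the first main lemma. Your reduction of the time integral to $\Phi^n_0-\Phi^n_t+M^n_t$ plus a degree-two time integral is algebraically correct, and your treatment of $\Phi^n_0$ (Hoeffding under the product initial law) and of $M^n_t$ (exponential supermartingale plus Doob) is sound. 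The gap lies in the two terms you yourself flag as the crux. First, $\sup_t|\Phi^n_t|/r_n$: the main lemma of \cite{gonccalves2024clt} is a static variational inequality for degree-two space averages tested against a $\nu^n_{\rho_*}$-density; it says nothing about a fixed-time, degree-one functional $\Phi^n_t=\sum_x\bar\eta_x(t)G_t(\tfrac xn)$, and the Feynman--Kac formula only controls time-integrated functionals. Your fallback --- Doob's inequality for the exponential martingale $\mathcal M^n_t$ of Subsection \ref{subsec:exp martingale} --- is circular: the usable form \eqref{exp martingale} of that martingale, and the exponential tightness estimate \eqref{exp tight c1} that would yield $\P^n_{\rho_*}(\sup_t|\Phi^n_t|>\delta r_n)$ super-exponentially small, are both derived in the paper \emph{from} Lemmas \ref{lem: super-esponential estimate 1} and \ref{lem: super-esponential estimate 2}. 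Second, the degree-two integral $r_n^{-1}\int_0^tQ^n_s\,ds$: ``invoking the main lemma'' is not a proof; to convert the static bound of Proposition \ref{prop: main lemma} into a pathwise super-exponential estimate you still need the full GRR plus Feynman--Kac machinery of the paper's proof of Lemma \ref{lem: super-esponential estimate 2}, including the computation $L_n^*\mathbf 1=\tfrac{\lambda}{2d\rho_*}\sum_x\sum_{y\sim x}\bar\eta_x\bar\eta_y$. So the decomposition does not bypass the hard analysis; it relocates it and then leaves it unaddressed.

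A related omission: nowhere do you use, or mention, the restriction $\lambda\in[-\lambda_c,\lambda_c]$. In the non-equilibrium setting this is not cosmetic: $\nu^n_{\rho_*}$ is not invariant, so $L_n^*\mathbf 1\neq 0$, and the only available control is to absorb it into the reaction Dirichlet form, which works precisely when the condition \eqref{lambda condition}, $C_0\kappa(\rho_*)\mathcal A(\lambda/(2d\rho_*))<1$, holds. Any complete proof of the lemma must confront this term at every time $s\in[0,T]$ (not only on the interval where the test function is switched on), and your argument never does.
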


Before proving the above lemma, we first recall the following version of Garsia-Rodemich-Rumsey inequality (see \cite[page 182]{klscaling}), which allows us to remove the supremum over time inside the above inequality.

\begin{lemma}[Garsia-Rodemich-Rumsey inequality]\label{lem: GRR inequality}
	Let $g: [0,T] \rightarrow \R$ be a continuous function. Assume $\psi (u), p (u)$ are strictly increasing functions satisfying that  
	\[\psi (0) = p (0) = 0, \quad \lim_{u \rightarrow \infty} \psi (u) = \infty.\]
	Then, for any $\delta > 0$,
	\[\sup_{|t-s| < \delta \atop 0 \leq s,t \leq T} |g(t) - g(s)| \leq 8 \int_0^\delta \psi^{-1} \Big(\frac{4B}{u^2} \Big) p (du),\]
	where
	\[B = \int_0^T ds \int_0^T dt \psi \Big( \frac{|g(t) - g(s)|}{p (|t-s|)}\Big).\]
\end{lemma}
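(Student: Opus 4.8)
The plan is to first use the Garsia--Rodemich--Rumsey inequality (Lemma \ref{lem: GRR inequality}) to eliminate the supremum over time, and then to estimate the exponential moments of the time increments by a Feynman--Kac argument. Write $g(t)=\int_0^t \frac{1}{r_n}\sum_{x\in\T_n^d}\bar\eta_x(s)H_s(\tfrac{x}{n})\,ds$, which is continuous (in fact absolutely continuous) with $g(0)=0$. For a parameter $\kappa>0$ to be sent to infinity at the end, I would apply Lemma \ref{lem: GRR inequality} with $\psi(u)=\exp\{\tfrac{a_n^2}{n^d}\kappa u\}$, $p(u)=\sqrt u$, and $\delta=T$. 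Since $\sup_{0\le t\le T}|g(t)|=\sup_t|g(t)-g(0)|\le\sup_{0\le s,t\le T}|g(t)-g(s)|$, this yields $\sup_t|g(t)|\le \tfrac{C\sqrt T}{(a_n^2/n^d)\kappa}\log(4B)+\tfrac{C_T}{(a_n^2/n^d)\kappa}$, where $B=\int_0^T\!\int_0^T\exp\{\tfrac{a_n^2}{n^d}\kappa\,|g(t)-g(s)|/\sqrt{|t-s|}\}\,ds\,dt$. Because Assumption \ref{assump: a n} guarantees $a_n^2/n^d\to\infty$, the additive constant is negligible, so on the event in \eqref{superexp estimate 1} one has $\log(4B)\gtrsim \tfrac{a_n^2}{n^d}\kappa\varepsilon$; Markov's inequality then gives $\tfrac{n^d}{a_n^2}\log\P^n_{\rho_*}(\sup_t|g(t)|>\varepsilon)\le -c\kappa\varepsilon+\tfrac{n^d}{a_n^2}\log \E^n_{\rho_*}[B]+o(1)$.

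The heart of the matter is therefore a \emph{uniform} bound on the exponential moment of the normalized increment, and the choice $p(u)=\sqrt u$ is made precisely so that this bound does not degenerate as $|t-s|\to0$. Fixing $0\le s<t\le T$ and setting $\tau=t-s$, I would control $\E^n_{\rho_*}[\exp\{\tfrac{a_n^2}{n^d}\kappa\,(g(t)-g(s))/\sqrt\tau\}]$ (and its mirror image, to absorb the absolute value) via the Feynman--Kac / numerical-abscissa inequality, which remains valid for the non-invariant reference $\nu^n_{\rho_*}$: the logarithmic moment is bounded by $\int_s^t\lambda_u\,du$, where $\lambda_u$ is the top eigenvalue of the symmetrization of $L_n$ perturbed by the linear potential $\tfrac{a_n^2}{n^d}\tfrac{\kappa}{\sqrt\tau\, r_n}\sum_x \bar\eta_x H_u(\tfrac{x}{n})$. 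For a degree-one potential this is a standard static variational problem whose value is governed by the $H_{-1}$-norm of the profile against the symmetric part of the generator. The reversible exclusion part $n^2 L_n^{\rm ex}$ supplies the diffusive control $\sum_x H(-\Delta_n)^{-1}H\asymp n^d\,\|(-\Delta)^{-1/2}H\|_{L^2(\T^d)}^2$ on the fluctuation modes, the conserved-mass mode and the non-reversible correction being handled as below. Collecting the factors one finds that the $n^d$ cancels, and, uniformly in $s,t$, $\tfrac{n^d}{a_n^2}\log\E^n_{\rho_*}[\exp\{\cdots\}]\le C\chi(\rho_*)\,\kappa^2\,a_n^2/r_n^2$. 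Under the hypothesis $r_n\gg a_n$ this tends to $0$, so $\tfrac{n^d}{a_n^2}\log\E^n_{\rho_*}[B]\to0$; the display of the previous paragraph then gives $\limsup_n\tfrac{n^d}{a_n^2}\log\P^n_{\rho_*}(\cdots)\le -c\kappa\varepsilon$, and letting $\kappa\to\infty$ proves \eqref{superexp estimate 1}.

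I expect the main obstacle to be exactly the variational estimate in the genuinely non-equilibrium setting. Since $\nu^n_{\rho_*}$ is not invariant for the reaction dynamics $L_n^{\rm r}$, the symmetrization of $L_n$ does not reduce to a clean nonnegative Dirichlet form, and the contribution of $L_n^{\rm r}$ to $\lambda_u$ is itself a fluctuation field, namely a bounded, $\nu^n_{\rho_*}$-centered local function summed over all sites (it contains both degree-one and degree-two pieces in $\bar\eta$). Absorbing this contribution into the exclusion Dirichlet form up to a negligible error is precisely where I would invoke the main lemma of \cite{gonccalves2024clt}. The remaining ingredients---the spectral $H_{-1}$ computation for the exclusion part, the verification $a_n^2/n^d\to\infty$, and the Riemann-sum approximation of the relevant norms of $H$---are routine; the only place the specific relation $r_n\gg a_n$ enters is in the final factor $a_n^2/r_n^2\to0$ that forces the uniform exponential-moment bound to vanish.
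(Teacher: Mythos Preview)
Your proposal does not address the stated lemma at all. Lemma~\ref{lem: GRR inequality} is the classical Garsia--Rodemich--Rumsey inequality, which the paper simply quotes from \cite[p.~182]{klscaling} without proof. What you have written is a sketch of a proof of Lemma~\ref{lem: super-esponential estimate 1}, using Lemma~\ref{lem: GRR inequality} as a tool.

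Reading your text as an attempt at Lemma~\ref{lem: super-esponential estimate 1}, the overall architecture (GRR to kill the supremum, then a Feynman--Kac variational bound on the increments) matches the paper, but the two arguments diverge at the key step and yours has gaps. The paper takes $\psi(u)=u^{2q}$, $p(u)=u^{1/3}$, reduces to \eqref{eqn 2}, applies Lemma~\ref{lem: feynman-kac formula}, and then controls the degree-one potential $\sum_x\bar\eta_x H(x/n)$ by the \emph{Glauber} Dirichlet form $\Gamma_n^{\rm r}$ via the change of variables $\eta\mapsto\eta^x$ and Young's inequality; the exclusion Dirichlet form is used only through Proposition~\ref{prop: main lemma} to absorb the non-equilibrium correction $\tfrac12 L_n^*\mathbf 1$. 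You instead propose to bound the degree-one potential by the exclusion $H_{-1}$ norm. That norm is infinite whenever $\int_{\T^d}H\neq0$, since $\sum_x\bar\eta_x$ lies in the kernel of $L_n^{\rm ex}$; your remark that ``the conserved-mass mode \dots\ [is] handled as below'' is not followed by any argument. The Glauber route avoids this obstruction entirely and is why the paper's proof goes through for arbitrary $H$.

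Two further points. First, the Feynman--Kac bound of Lemma~\ref{lem: feynman-kac formula} starts the process from $\nu^n_{\rho_*}$ at time $0$, not at time $s$; one must therefore set $W\equiv0$ on $[0,s]$ and still pay for $\tfrac12 L_n^*\mathbf1$ there. In the paper this contributes the term $C_0\mathcal A(\tfrac{\lambda}{2d\rho_*})n^{d-2}g_d(n)$, and killing it after dividing by $a_n^2/n^d$ is exactly where the lower bound $n^{d-1}\sqrt{g_d(n)}\ll a_n$ in Assumption~\ref{assump: a n} is used; your sketch omits this contribution. Second, the paper's computation shows $L_n^*\mathbf1=\tfrac{\lambda}{2d\rho_*}\sum_{x}\sum_{y\sim x}\bar\eta_x\bar\eta_y$ is purely degree-two, not a mixture of degree-one and degree-two pieces as you state. (Also, your choice $\psi(u)=\exp\{\tfrac{a_n^2}{n^d}\kappa u\}$ has $\psi(0)=1$, violating the hypothesis of Lemma~\ref{lem: GRR inequality}; this is easily fixed by subtracting $1$.)
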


\begin{proof}[Proof of Lemma \ref{lem: super-esponential estimate 1}] First, we adopt the ideas in \cite{gao2003moderate}, where  the Garsia-Rodemich-Rumsey inequality was used to prove exponential tightness of the density fluctuation field, to remove the supremum over time inside the probability in \eqref{superexp estimate 1}.  In Lemma \ref{lem: GRR inequality}, take
\[g(t) =  \int_0^t \frac{1}{r_n} \sum_{x \in \T_n^d} \bar{\eta}_x (s) H_s (\tfrac{x}{n}) ds,\quad  \psi (u) = u^{2q}, \quad p(u) = u^{1/3},\]
where $q > 3$ is fixed. Then, there exists some constant $C_1 = C_1 (q)$ such that
\[\sup_{0 \leq t \leq T} \Big| \int_0^t \frac{1}{r_n} \sum_{x \in \T_n^d} \bar{\eta}_x (s) H_s (\tfrac{x}{n}) ds\Big| \leq C_1 \int_0^T \Big( \frac{B}{u^2}\Big)^{\tfrac{1}{2q}} \frac{du}{u^{2/3}}  = C_1 T^{\tfrac{1}{3} - \tfrac{1}{q}} B^{\tfrac{1}{2q}} =: C_2 (q,T) B^{\tfrac{1}{2q}},\]
where 
\[B = \int_0^T dt \int_0^T dt^\prime \Big( \frac{1}{|t-t^\prime|^{1/3}} \big| \int_t^{t^\prime} \frac{1}{r_n} \sum_{x \in \T_n^d} \bar{\eta}_x (s) H_s (\tfrac{x}{n}) ds \big| \Big)^{2q}.\]
Thus, by Markov's inequality, for any $A > 0$,
\begin{align*}
&\frac{n^d}{a_n^2} \log 	 \P^n_{\rho_*} \Big( \sup_{0 \leq t \leq T} \Big| \int_0^t \frac{1}{r_n} \sum_{x \in \T_n^d} \bar{\eta}_x (s) H_s (\tfrac{x}{n}) ds\Big| > \varepsilon  \Big)  \\
\leq& \frac{n^d}{a_n^2} \log \P^n_{\rho_*} \Big( B^{\tfrac{1}{2q}} > \frac{\varepsilon}{C_2}\Big) \\
\leq& - \frac{A \varepsilon}{C_2 T^{1/q}} + \frac{n^d}{a_n^2} \log \E^n_{\rho_*} \Big[ \exp \Big\{  \frac{a_n^2A}{n^d} \Big(\frac{B}{T^2}\Big)^{\tfrac{1}{2q}} \Big\}\Big].
\end{align*}
Since $A$ could be taken arbitrarily large, we only need to show that,  for any $A > 0$,
\begin{equation*}
	\lim_{n \rightarrow \infty} \frac{n^d}{a_n^2} \log \E^n_{\rho_*} \Big[ \exp \Big\{  \frac{a_n^2 A}{n^d} \Big(\frac{B}{T^2}\Big)^{\tfrac{1}{2q}} \Big\}\Big] = 0.
\end{equation*}
Define 
\[f_q (x) = \exp \Big\{   \big((2q-1)^{2q} +x \big)^{\tfrac{1}{2q}}\Big\}, \quad x \geq 0.\]	
One could check directly that $f_q$ is convex.  Then,
\begin{align*}
	\exp \Big\{  \frac{a_n^2 A}{n^d} \Big(\frac{B}{T^2}\Big)^{\tfrac{1}{2q}} \Big\} &\leq f_q \Big( \frac{1}{T^2}  \int_0^T dt \int_0^T dt^\prime  \Big( \frac{a_n^2 A}{n^d |t-t^\prime|^{1/3}} \big| \int_t^{t^\prime} \frac{1}{r_n} \sum_{x \in \T_n^d} \bar{\eta}_x (s) H_s (\tfrac{x}{n}) ds \big| \Big)^{2q} \Big) \\
	&\leq \frac{1}{T^2}  \int_0^T dt \int_0^T dt^\prime f_q \Big( \Big( \frac{a_n^2 A}{n^d|t-t^\prime|^{1/3} } \big| \int_t^{t^\prime} \frac{1}{r_n} \sum_{x \in \T_n^d} \bar{\eta}_x (s) H_s (\tfrac{x}{n}) ds \big| \Big)^{2q} \Big)\\
	& \leq e^{2q-1} \frac{1}{T^2}  \int_0^T dt \int_0^T dt^\prime  \exp \Big\{ \frac{a_n^2 A}{n^d |t-t^\prime|^{1/3}} \big| \int_t^{t^\prime} \frac{1}{r_n} \sum_{x \in \T_n^d} \bar{\eta}_x (s) H_s (\tfrac{x}{n}) ds \big|  \Big\},
\end{align*}
where we used the inequality $f_q (x) \leq \exp \{(2q-1) + x^{\tfrac{1}{2q}}\}$. Thus, we only need to show that, for any $A > 0$,
\begin{equation}\label{eqn 2}
	\lim_{n \rightarrow \infty} \frac{n^d}{a_n^2} \log \Big(  \frac{1}{T^2}  \int_0^T dt \int_0^T dt^\prime \,\E^n_{\rho_*} \Big[  \exp \Big\{ \frac{a_n^2 A}{n^d |t-t^\prime|^{1/3} } \big| \int_t^{t^\prime} \frac{1}{r_n} \sum_{x \in \T_n^d} \bar{\eta}_x (s) H_s (\tfrac{x}{n}) ds \big|  \Big\}\Big]\Big) = 0.
\end{equation}

Next, we bound the expectation
\[\E^n_{\rho_*} \Big[  \exp \Big\{ \frac{a_n^2 A}{n^d |t-t^\prime|^{1/3}} \big| \int_t^{t^\prime} \frac{1}{r_n} \sum_{x \in \T_n^d} \bar{\eta}_x (s) H_s (\tfrac{x}{n}) ds \big|  \Big\}\Big].\]
Without loss of generality, assume  $t < t^\prime$. Using the basic inequalities $e^{|x|} \leq e^x + e^{-x}$ and $\log (x+y) \leq \log 2 + \max \{\log x, \log y\}$, we can remove the absolute value inside the above exponential. Then, we shall use the following version of  Feynman-Kac formula, see \cite[Proposition 4.3]{gonccalves2024clt} for example. Below, we say $f$ is a $\nu^n_{\rho_*}$-density if $f \geq 0$ and $E_{\nu^n_{\rho_*}} [f] = 1$.

\begin{lemma}\label{lem: feynman-kac formula}
Let $W$ be a function on $[0,T] \times \Omega_n$.  Then,
\[\log \E^n_{\rho_*} \Big[\exp \Big\{  \int_0^T W(t,\eta(t)) dt \Big\}  \Big] \leq \int_0^T dt \sup_{f: \nu^n_{\rho_*}\text{-density}} \Big\{  \int \Big(W_t + \frac{1}{2} L_n^* \mathbf{1} \Big) f d \nu^n_{\rho_*} - \int \Gamma_n (\sqrt{f}) d \nu^n_{\rho_*}\Big\},\]
where $L_n^*$ is the adjoint operator of $L_n$ in $L^2 (\nu^n_{\rho_*})$, and $\Gamma_n  (f) = \Gamma_n^{\rm ex} (f) + \Gamma_n^{\rm r} (f)$, where
\begin{align*}
\Gamma_n^{\rm ex} (f)  &= \frac{n^2}{2} \sum_{x \in \T_n^d} \sum_{i=1}^d \big[f(\eta^{x,x+e_i}) - f(\eta)\big]^2,\\
\Gamma_n^{\rm r} (f)  &= \frac{1}{2} \sum_{x \in \T_n^d} c_x (\eta) \big[f(\eta^{x}) - f(\eta)\big]^2.
\end{align*}
\end{lemma}

\medspace

Take
\[W(s,\eta) = \frac{a_n^2 A}{n^dr_n|t-t^\prime|^{1/3} }  \sum_{x \in \T_n^d} \bar{\eta}_x H_s (\tfrac{x}{n}), \quad s \in [t,t^\prime],\]
and $W(s,\eta) = 0$ otherwise. Then, we bound
\begin{equation}\label{eqn 1}
\log \E^n_{\rho_*} \Big[  \exp \Big\{ \frac{a_n^2 A}{n^d|t-t^\prime|^{1/3} } \big| \int_t^{t^\prime} \frac{1}{r_n} \sum_{x \in \T_n^d} \bar{\eta}_x (s) H_s (\tfrac{x}{n}) ds \big|  \Big\}\Big] \leq \int_0^{t^\prime} \Lambda^n_s ds,
\end{equation}
where, for $s < t$,
\[\Lambda^n_s = \sup_{f: \nu^n_{\rho_*}\text{-density}} \Big\{  \int  \frac{1}{2} L_n^* \mathbf{1}  f d \nu^n_{\rho_*} - \int \Gamma_n (\sqrt{f}) d \nu^n_{\rho_*}\Big\},\]
and for $t \leq s \leq t^\prime$,
\[\Lambda^n_s = \sup_{f: \nu^n_{\rho_*}\text{-density}} \Big\{  \int \Big( \frac{a_n^2 A}{n^dr_n|t-t^\prime|^{1/3}}  \sum_{x \in \T_n^d} \bar{\eta}_x H_s (\tfrac{x}{n}) + \frac{1}{2} L_n^* \mathbf{1}   \Big) f d \nu^n_{\rho_*} - \int \Gamma_n (\sqrt{f}) d \nu^n_{\rho_*}\Big\}.\]

To bound  $\Lambda^n_s$, we first recall the so-called main lemma, which was proved in \cite[Theorem 3.3]{gonccalves2024clt}.  For $1 \leq i \leq d$, let $h^i$ be functions on $\T_n^d$.  Define
\[V(h) := \sum_{x \in \T_n^d} \sum_{i=1}^d \bar{\eta}_x \bar{\eta}_{x+e_i} h^i (x).\]

\begin{proposition}[The first main lemma]\label{prop: main lemma}
There exists some constant $C_0 =C_0 (d)$ such that for any $\nu^n_{\rho_*}$-density $f$,
\[	\int V(h) f d \nu^n_{\rho_*} \leq \frac{1}{4} \int \Gamma_n^{\rm ex} (\sqrt{f}) d \nu^n_{\rho_*} + C_0 \kappa (\rho_*) \mathcal{A} (\|h\|_\infty) \int \Gamma_n^{\rm r} (\sqrt{f}) d \nu^n_{\rho_*}  + C_0  \mathcal{A} (\|h\|_\infty) n^{d-2} g_d (n),\]
where $\mathcal{A} (u) := u (1+u)$, $\|h\|_{\infty} := \max_{x \in \T_n^d, 1 \leq i \leq d} |h^i (x)|$, and
\[ \kappa (\rho) = \frac{2 \rho (1-\rho) | \log \tfrac{\rho}{1-\rho}|}{
	\min \{a,a+\lambda,b\} |1-2\rho|}.\]
\end{proposition}

To calculate $L_n^* \mathbf{1}$, we use the following formula, see \cite[Proposition 3.3]{jaram20nonequilireaction} for example.

\begin{lemma}\label{lem 2}
	Let $X_t$ be a continuous markov process on some space $\Omega$ with generator 
	\[L f (x) = \sum_{y \in \Omega} r(x,y) [f(y) - f(x)].\]
	Let $\nu$ be a probability measure on $\Omega$ and $L^*$ be the adjoint of $L$ in $L^2 (\nu)$. Then,
	\[L^* \mathbf{1} (x) = \sum_{y \neq x} \Big\{  \frac{\nu(y) r(y,x)}{\nu(x)} - r(x,y) \Big\}.\]
\end{lemma}

We first bound the term $\Lambda^n_s$ for $s < t$. Using the above lemma,
\begin{align*}
L_n^* \mathbf{1} &= \sum_{x \in \T_n^d}  \Big\{  \eta_x \Big(  \frac{1-\rho_*}{\rho_*}  \Big[a+ \frac{\lambda}{2d} \sum_{y \sim x} \eta_y\Big] -b\Big) + (1-\eta_x) \Big(  \frac{\rho_*}{1-\rho_*}   b  -  \Big[a+ \frac{\lambda}{2d} \sum_{y \sim x} \eta_y\Big]  \Big) \Big\}\\
&= \sum_{x \in \T_n^d}  \Big(\frac{\eta_x}{\rho_*} - \frac{1-\eta_x}{1-\rho_*}\Big)  \Big(\Big[a+ \frac{\lambda}{2d} \sum_{y \sim x} \eta_y\Big] (1-\rho_*) - b \rho_*\Big)\\
&= \frac{1}{\chi (\rho_*)}\sum_{x \in \T_n^d} \bar{\eta}_x \Big( F(\rho_*) + \frac{\lambda (1-\rho_*)}{2d} \sum_{y \sim x} \bar{\eta}_y \Big)\\
&= \frac{\lambda}{2d \rho_*} \sum_{x \in \T_n^d} \sum_{y \sim x} \bar{\eta}_x \bar{\eta}_{y}.
\end{align*}
Thus, by Proposition \ref{prop: main lemma}, for $s < t$,
\begin{align*}
\Lambda^n_s \leq \sup_{f: \nu^n_{\rho_*}\text{-density}} \Big\{   \Big[ C_0 \kappa (\rho_*) \mathcal{A} \Big(\frac{\lambda}{2d \rho_*}\Big) - 1 \Big] \int \Gamma_n^{\rm r} (\sqrt{f}) d \nu^n_{\rho_*}  + C_0  \mathcal{A}  \Big(\frac{\lambda}{2d \rho_*}\Big) n^{d-2} g_d (n) \Big\}.
\end{align*}
Let $\lambda_c > 0$ be such that
\begin{equation}\label{lambda condition}
C_0 \kappa (\rho_*)  \mathcal{A} \Big(\frac{\lambda}{2d \rho_*}\Big ) < 1 
\end{equation}
for any $\lambda \in [- \lambda_c,\lambda_c]$, which is possible since 
\[\lim_{\lambda \rightarrow 0}C_0 \kappa (\rho_*)  \mathcal{A} \Big(\frac{\lambda}{2d \rho_*}\Big ) = 0. \]
Then, we bound, for $s < t$, 
\[\Lambda^n_s \leq C_0  \mathcal{A}  \Big(\frac{\lambda}{2d \rho_*}\Big) n^{d-2} g_d (n).\]
Next, we deal with the case $t \leq s \leq t^\prime$. We need  to consider the extra term  
	\[\int \frac{a_n^2 A}{n^dr_n |t-t^\prime|^{1/3}}  \sum_{x \in \T_n^d} \bar{\eta}_x H_s (\tfrac{x}{n})  f(\eta)d\nu^n_{\rho_*}.\]
By making the change of transformations $\eta \mapsto \eta^x$, the last term equals
	\begin{align*}
	&\int \frac{a_n^2 A}{n^dr_n|t-t^\prime|^{1/3} }  \sum_{x \in \T_n^d} (1-\eta_x - \rho_*)  H_s (\tfrac{x}{n}) f(\eta^x) \frac{\nu^n_{\rho_*} (\eta^x)}{\nu^n_{\rho_*} (\eta)} d\nu^n_{\rho_*}\\
	=&- \int \frac{a_n^2 A}{n^dr_n|t-t^\prime|^{1/3} } \sum_{x \in \T_n^d} \bar{\eta}_x  H_s (\tfrac{x}{n}) f(\eta^x) d\nu^n_{\rho_*}\\
	=& \frac{1}{2}\int \frac{a_n^2 A}{n^dr_n|t-t^\prime|^{1/3} } \sum_{x \in \T_n^d} \bar{\eta}_x  H_s (\tfrac{x}{n}) [f(\eta) - f(\eta^x) ]d\nu^n_{\rho_*}.
	\end{align*}
	By Young's inequality, for any $\varepsilon_0 > 0$,  the last line is bounded by
	\begin{align*}
		&\frac{\varepsilon_0}{2} \int \sum_{x \in \T_n^d} c_x (\eta) [\sqrt{f(\eta)} - \sqrt{f(\eta^x)} ]^2d\nu^n_{\rho_*} \\
		&\qquad + \frac{A^2 a_n^4}{8 \varepsilon_0 n^{2d} r_n^2 |t-t^\prime|^{2/3}} \int \sum_{x \in \T_n^d} c_x (\eta)^{-1} \big(\bar{\eta}_x\big)^2  H^2_s (\tfrac{x}{n})  [\sqrt{f(\eta)} + \sqrt{f(\eta^x)} ]^2d\nu^n_{\rho_*}\\
		&\leq \varepsilon_0 \int \Gamma_n^{\rm r} \big(\sqrt{f}\big) d \nu^n_{\rho_*} + \frac{C_3A^2a_n^4}{\varepsilon_0 n^d r_n^2 |t-t^\prime|^{2/3}} 
	\end{align*}
for some constant $C_3 = C_3 (\rho_*,a,b,\lambda,H)$.  Thus, for $t \leq s \leq t^\prime$,  for any $\varepsilon_0 > 0$,
\begin{align*}
	\Lambda^n_s \leq \sup_{f: \nu^n_{\rho_*}\text{-density}} \Big\{   &\Big[ C_0 \kappa (\rho_*) \mathcal{A} \Big(\frac{\lambda}{2d \rho_*}\Big) 
	+ \varepsilon_0 - 1 \Big] \int \Gamma_n^{\rm r} (\sqrt{f}) d \nu^n_{\rho_*}  \\
	&+ C_0  \mathcal{A}  \Big(\frac{\lambda}{2d \rho_*}\Big) n^{d-2} g_d (n) 
	 +  \frac{C_3A^2a_n^4}{\varepsilon_0 n^d r_n^2 |t-t^\prime|^{2/3}} \Big\}.
\end{align*}
By choosing $\varepsilon_0$ small enough such that $C_0 \kappa (\rho_*) \mathcal{A} (\frac{\lambda}{2d \rho_*}) + \varepsilon_0 < 1$, we have that, for $t \leq s \leq t^\prime$,
\[\Lambda^n_s \leq C_0  \mathcal{A}  \Big(\frac{\lambda}{2d \rho_*}\Big) n^{d-2} g_d (n) +  \frac{C_3A^2a_n^4}{\varepsilon_0 n^d r_n^2 |t-t^\prime|^{2/3}}.\]

Recall \eqref{eqn 1}. Then,
\begin{align*}
	&\log \E^n_{\rho_*} \Big[  \exp \Big\{ \frac{a_n^2 A}{n^d\sqrt{|t-t^\prime|}} \big| \int_t^{t^\prime} \frac{1}{r_n} \sum_{x \in \T_n^d} \bar{\eta}_x (s) H_s (\tfrac{x}{n}) ds \big|  \Big\}\Big] \\ &\leq \int_0^t C_0  \mathcal{A}  \Big(\frac{\lambda}{2d \rho_*}\Big) n^{d-2} g_d (n) ds 
	 + \int_t^{t^\prime}  \Big[C_0  \mathcal{A}  \Big(\frac{\lambda}{2d \rho_*}\Big) n^{d-2} g_d (n) +  \frac{C_3A^2a_n^4}{\varepsilon_0 n^d r_n^2 |t-t^\prime|^{2/3}} \Big] ds\\
	 &\leq C_0  t^\prime \mathcal{A}  \Big(\frac{\lambda}{2d \rho_*}\Big) n^{d-2} g_d (n) + \frac{C_3A^2a_n^4}{\varepsilon_0 n^d r_n^2 } |t-t^\prime|^{1/3}.
\end{align*}
Finally, we bound the limit in \eqref{eqn 2} by
\begin{equation}\label{eqn 7}
\begin{aligned}
	&\lim_{n \rightarrow \infty} \frac{n^d}{a_n^2} \log \Big(  \frac{1}{T^2}  \int_0^T dt \int_0^T dt^\prime \, \exp \Big\{  C_0  t^\prime \mathcal{A}  \Big(\frac{\lambda}{2d \rho_*}\Big) n^{d-2} g_d (n) + \frac{C_3A^2a_n^4}{\varepsilon_0 n^d r_n^2 } |t-t^\prime|^{1/3} \Big\}\Big)\\
	\leq& \lim_{n \rightarrow \infty} \Big\{ C_0 T  \mathcal{A}  \Big(\frac{\lambda}{2d \rho_*}\Big)  \frac{n^{2d-2} g_d (n)}{a_n^2} + \frac{C_3 (2T)^{1/3} A^2a_n^2}{\varepsilon_0 r_n^2} \Big\}.
\end{aligned}
\end{equation}
We conclude the proof by using the assumption $n^{d-1} \sqrt{g_d (n)} \ll a_n \ll r_n$.
\end{proof}

\begin{lemma}[A super-exponential estimate for degree-two terms]\label{lem: super-esponential estimate 2}
For any $H \in \mathcal{D} (\T^d)$, for any $1 \leq i \leq d$,  and for any $\varepsilon > 0$,
	\begin{equation}
		\lim_{n \rightarrow \infty}	\frac{n^d}{a_n^2} \log \P^n_{{\rho_\ast}} \Big( \sup_{0 \leq t \leq T} \Big|\int_0^t \frac{1}{a_n} \sum_{x \in \T_n^d} \bar{\eta}_x (s) \bar{\eta}_{x+e_i} (s) H_s (\tfrac{x}{n}) ds\Big| > \varepsilon  \Big) = -\infty.
	\end{equation}
\end{lemma}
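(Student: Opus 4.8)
The proof follows the same scheme as that of Lemma~\ref{lem: super-esponential estimate 1}, the only essential difference being that the Young-inequality step must be replaced by an application of the first main lemma, Proposition~\ref{prop: main lemma}. This replacement is unavoidable: a naive Young bound on a degree-two term $\bar\eta_x\bar\eta_{x+e_i}$ would leave, after the rescaling by $1/a_n$, an error of order $a_n^4/(n^{2d}a_n^2)=O(1)$, which does not vanish. The plan is first to apply the Garsia--Rodemich--Rumsey inequality (Lemma~\ref{lem: GRR inequality}) with
\[
g(t)=\int_0^t\frac1{a_n}\sum_{x\in\T_n^d}\bar\eta_x(s)\bar\eta_{x+e_i}(s)H_s(\tfrac{x}{n})\,ds,\qquad \psi(u)=u^{2q}\ (q>3),\qquad p(u)=u^{1/3},
\]
and then, exactly as in Lemma~\ref{lem: super-esponential estimate 1}, to use Markov's inequality together with the convexity of $f_q$ to reduce the claim to showing that, for every $A>0$,
\[
\lim_{n\to\infty}\frac{n^d}{a_n^2}\log\Big(\frac{1}{T^2}\int_0^T dt\int_0^T dt'\,\E^n_{\rho_*}\Big[\exp\Big\{\frac{a_n^2 A}{n^d|t-t'|^{1/3}}\Big|\int_t^{t'}\frac1{a_n}\sum_{x\in\T_n^d}\bar\eta_x(s)\bar\eta_{x+e_i}(s)H_s(\tfrac{x}{n})\,ds\Big|\Big\}\Big]\Big)=0.
\]
As in Lemma~\ref{lem: super-esponential estimate 1}, the absolute value is removed by $e^{|x|}\le e^x+e^{-x}$ and $\log(x+y)\le\log2+\max\{\log x,\log y\}$, the term carrying the minus sign being handled by replacing $H$ with $-H$.

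The key new ingredient is to split the integration region $[0,T]^2$ into $\{|t-t'|\ge\delta_n\}$ and $\{|t-t'|<\delta_n\}$, where $\delta_n:=(a_n/n^d)^2$, which tends to $0$ by Assumption~\ref{assump: a n}. On the near-diagonal region $\{|t-t'|<\delta_n\}$ I would use only the crude bound $\big|\int_t^{t'}\frac1{a_n}\sum_x\bar\eta_x(s)\bar\eta_{x+e_i}(s)H_s(\tfrac{x}{n})\,ds\big|\le\|H\|_\infty\frac{n^d}{a_n}|t-t'|$, so that the exponent there is at most $a_n A\|H\|_\infty|t-t'|^{2/3}\le a_n A\|H\|_\infty\delta_n^{2/3}$; integrating over this region, whose Lebesgue measure is $O(\delta_n)$, and multiplying by $n^d/a_n^2$ gives a contribution bounded by $\frac{n^d}{a_n^2}\log\frac{2\delta_n}{T}+A\|H\|_\infty\frac{n^d}{a_n}\delta_n^{2/3}$, which tends to $-\infty$ since $\frac{n^d}{a_n}\delta_n^{2/3}=(a_n/n^d)^{1/3}\to0$ while $\log\delta_n\to-\infty$.

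On the bulk region $\{|t-t'|\ge\delta_n\}$ I would invoke the Feynman--Kac formula (Lemma~\ref{lem: feynman-kac formula}) with $W(s,\eta)=\frac{a_n A}{n^d|t-t'|^{1/3}}V(h_s)$ for $s\in[t,t']$ and $W\equiv0$ otherwise, where $h_s^i(x)=H_s(\tfrac{x}{n})$ and $h_s^j\equiv0$ for $j\neq i$; this bounds the log-expectation by $\int_0^T\Lambda^n_s\,ds$. Using the identity $\tfrac12 L_n^*\mathbf{1}=V(\tfrac{\lambda}{2d\rho_*}\mathbf{1})$ established in the proof of Lemma~\ref{lem: super-esponential estimate 1}, for $t\le s\le t'$ the functional inside $\Lambda^n_s$ is $\int V(\hat h_s)f\,d\nu^n_{\rho_*}-\int\Gamma_n(\sqrt{f})\,d\nu^n_{\rho_*}$ with $\|\hat h_s\|_\infty\le\frac{a_n A\|H\|_\infty}{n^d\delta_n^{1/3}}+\frac{|\lambda|}{2d\rho_*}=:\beta_n$, and crucially $\beta_n\to\frac{|\lambda|}{2d\rho_*}$ because $\frac{a_n}{n^d\delta_n^{1/3}}=(a_n/n^d)^{1/3}\to0$ by Assumption~\ref{assump: a n}. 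Proposition~\ref{prop: main lemma} then gives, for $t\le s\le t'$,
\[
\Lambda^n_s\le\sup_{f: \nu^n_{\rho_*}\text{-density}}\Big\{-\tfrac34\int\Gamma_n^{\rm ex}(\sqrt{f})\,d\nu^n_{\rho_*}+\big(C_0\kappa(\rho_*)\mathcal{A}(\beta_n)-1\big)\int\Gamma_n^{\rm r}(\sqrt{f})\,d\nu^n_{\rho_*}\Big\}+C_0\mathcal{A}(\beta_n)\,n^{d-2}g_d(n),
\]
while for $s\notin[t,t']$ it is bounded, as in Lemma~\ref{lem: super-esponential estimate 1}, by $C_0\mathcal{A}(\tfrac{\lambda}{2d\rho_*})n^{d-2}g_d(n)$. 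Since $\mathcal{A}$ is continuous and increasing and $\beta_n\to\frac{|\lambda|}{2d\rho_*}$, condition~\eqref{lambda condition} guarantees $C_0\kappa(\rho_*)\mathcal{A}(\beta_n)<1$ for all large $n$ when $\lambda\in[-\lambda_c,\lambda_c]$, so the supremum is $\le0$ and $\Lambda^n_s\le Cn^{d-2}g_d(n)$ uniformly over the bulk region. Hence the log-expectation on the bulk region is at most $CT\,n^{d-2}g_d(n)$, so its contribution to $\frac{n^d}{a_n^2}\log(\cdot)$ is bounded by $CT\,n^{2d-2}g_d(n)/a_n^2$, which vanishes by Assumption~\ref{assump: a n}. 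Combining the two regions via $\log(x+y)\le\log2+\max\{\log x,\log y\}$ yields the displayed limit, and hence the lemma.

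I expect the main obstacle to be the bulk-region estimate. Unlike in the degree-one case, the degree-two term cannot be absorbed cheaply into the Dirichlet form by Young's inequality, so one is forced to use Proposition~\ref{prop: main lemma}, whose gain is available only as long as the prefactor $C_0\kappa(\rho_*)\mathcal{A}(\|\hat h_s\|_\infty)$ stays strictly below $1$. This condition breaks down near the diagonal $t=t'$, which is exactly why the region split is needed, and it is precisely the hypothesis $a_n\ll n^d$ that renders the extra factor of order $a_n/(n^d|t-t'|^{1/3})$ coming from the main-lemma rescaling negligible away from the diagonal.
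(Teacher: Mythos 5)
Your proof is correct, and its backbone --- the Garsia--Rodemich--Rumsey reduction, the Feynman--Kac bound, and the use of Proposition \ref{prop: main lemma} to absorb the degree-two term into the Dirichlet form away from the diagonal --- is exactly the paper's. Where you genuinely differ is in the treatment of the singularity at $t=t'$. The paper does not excise a neighborhood of the diagonal from the $(t,t')$-integral: it keeps the Feynman--Kac/main-lemma estimate for every pair $(t,t')$, and in the bad regime $\frac{a_nA\|H\|_\infty}{n^d|t-t'|^{1/3}}\ge\varepsilon_0$ (equivalently $|t-t'|\lesssim(a_n/n^d)^3$) it controls the now-positive coefficient of $\int\Gamma_n^{\rm r}(\sqrt f)\,d\nu^n_{\rho_*}$ by expanding $\mathcal{A}(x+y)=\mathcal{A}(x)+(1+2x)y+y^2$ and invoking the trivial bound $\int\Gamma_n^{\rm r}(\sqrt f)\,d\nu^n_{\rho_*}\le Cn^d$, which yields $\int_t^{t'}\Lambda^n_s\,ds\le C(a_n^3/n^{2d}+n^{d-2}g_d(n))$ and hence a contribution of order $a_n/n^d$ after multiplying by $n^d/a_n^2$. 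Your alternative --- cutting out $\{|t-t'|<\delta_n\}$ with $\delta_n=(a_n/n^d)^2$ and using there the deterministic bound $|\sum_x\bar\eta_x(s)\bar\eta_{x+e_i}(s)H_s(\tfrac xn)|\le n^d\|H\|_\infty$ --- is simpler and equally valid; both devices exploit the same fact that the dangerous region has width $o(1)$ precisely because $a_n\ll n^d$. Two small inaccuracies, neither fatal: the near-diagonal contribution $\frac{n^d}{a_n^2}\log\frac{2\delta_n}{T}$ need not tend to $-\infty$ (under Assumption \ref{assump: a n} one has $n^d/a_n^2\to0$, so the product may tend to $0$), but it is eventually nonpositive, which is all you need since the reduction only requires the $\limsup$ to be $\le 0$ before optimizing over $A$; and in your opening heuristic the quantity $a_n^4/(n^{2d}a_n^2)=a_n^2/n^{2d}$ actually vanishes --- the correct statement is that the Young error, after multiplication by $n^d/a_n^2$, is of order $A^2$, which is not $o(A)$ and therefore ruins the final optimization over $A$.
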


\begin{proof}
Following the proof of \eqref{eqn 2} line by line, we only need to show that, for any $A > 0$,
	\begin{multline}\label{eqn 3}
		\lim_{n \rightarrow \infty} \frac{n^d}{a_n^2} \log \Big(  \frac{1}{T^2}  \int_0^T dt \int_0^T dt^\prime \\
		\E^n_{\rho_*} \Big[  \exp \Big\{ \frac{a_n A}{n^d |t-t^\prime|^{1/3}} \Big| \int_t^{t^\prime}  \sum_{x \in \T_n^d} \bar{\eta}_x (s) \bar{\eta}_{x+e_i} (s) H_s (\tfrac{x}{n}) ds \Big|  \Big\}\Big]\Big) = 0.
	\end{multline}
	As before, we assume $t < t^\prime$. By Lemma \ref{lem: feynman-kac formula}, the expectation in \eqref{eqn 3} is bounded by $\exp \{ \int_0^{t^\prime} \Lambda^n_s ds\}$, where for $t \leq s \leq t^\prime$,
	\begin{equation}
	 \Lambda^n_s = \sup_{f: \nu^n_{\rho_*}\text{-density}} \Big\{ \int \Big( \frac{a_n A}{n^d |t-t^\prime|^{1/3}} \sum_{x \in \T_n^d} \bar{\eta}_x  \bar{\eta}_{x+e_i} H_s (\tfrac{x}{n}) + \frac{1}{2} L_n^* \mathbf{1}  \Big) f (\eta)d\nu^n_{\rho_*} - \int \Gamma_n (\sqrt{f}) d \nu^n_{\rho_*}\Big\}.
	\end{equation}
	and for $s < t$, 
	\[\Lambda^n_s = \sup_{f: \nu^n_{\rho_*}\text{-density}} \Big\{ \int  \frac{1}{2} L_n^* \mathbf{1} f (\eta)d\nu^n_{\rho_*} - \int \Gamma_n (\sqrt{f}) d \nu^n_{\rho_*}\Big\}.\]
	We have shown in the proof of Lemma \ref{lem: super-esponential estimate 1} that, for $s < t$,
	\[\Lambda^n_s \leq C_0  \mathcal{A}  \Big(\frac{\lambda}{2d \rho_*}\Big) n^{d-2} g_d (n).\]
	In particular,
	\begin{equation}\label{eqn 4}
	\int_0^t \Lambda^n_s ds  \leq C_0 T  \mathcal{A}  \Big(\frac{\lambda}{2d \rho_*}\Big) n^{d-2} g_d (n).
	\end{equation}
	For $t \leq s \leq t^\prime$, by Proposition \ref{prop: main lemma},
	\begin{align*}
		\Lambda^n_s \leq \sup_{f: \nu^n_{\rho_*}\text{-density}} \Big\{   &\Big[ C_0 \kappa (\rho_*) \mathcal{A} \Big(\frac{\lambda}{2d \rho_*} + \frac{a_n A}{n^d |t-t^\prime|^{1/3}} \|H\|_\infty \Big) 
		 - 1 \Big] \int \Gamma_n^{\rm r} (\sqrt{f}) d \nu^n_{\rho_*}  \\
		&+ C_0  \mathcal{A}  \Big(\frac{\lambda}{2d \rho_*}+  \frac{a_n A}{n^d |t-t^\prime|^{1/3}} \|H\|_\infty \Big) n^{d-2} g_d (n) \Big\}.
	\end{align*}
	Note that we cannot deal with the coefficient before $\Gamma_n^{\rm r} (\sqrt{f})$ by using \eqref{lambda condition} directly since there is a singularity at the point $t=t^\prime$.  The ideas is as follows. Fix $\varepsilon_0 > 0$ small enough such that $C_0 \kappa (\rho_*) \mathcal{A} (\tfrac{\lambda}{2d\rho_*} + \varepsilon_0) < 1$.   One one hand, if \[\frac{a_n A}{n^d |t-t^\prime|^{1/3}} \|H\|_\infty  < \varepsilon_0,\] then 
	\[\int_t^{t^\prime} \Lambda^n_s \leq C_4 T n^{d-2} g_d (n)\]
	for some constant $C_4 = C_4 (C_0,\lambda,d,\rho_*,H,A)$. On the other hand, if 
	\[\frac{a_n A}{n^d |t-t^\prime|^{1/3}} \|H\|_\infty  \geq  \varepsilon_0,\]
	then, since $\mathcal{A} (x+y) = \mathcal{A} (x) + (1+2x) y + y^2$,
	\begin{equation}\label{eqn 5}
		\begin{aligned}
		&\int_t^{t^\prime} \Lambda^n_s ds \\
		&\leq \sup_{f} 
		\Big\{    C_0 \kappa (\rho_*) \Big( \big(1+\tfrac{\lambda}{d\rho_*}\big) \frac{a_n A |t-t^\prime|^{2/3}}{n^d} \|H\|_\infty + \frac{a_n^2 A^2 |t-t^\prime|^{1/3}}{n^{2d}} \|H\|_\infty^2\Big)  \int \Gamma_n^{\rm r} (\sqrt{f}) d \nu^n_{\rho_*} \\
		&\qquad + C_0 (t^\prime - t)    \mathcal{A}  \Big(\frac{\lambda}{2d \rho_*}+  \frac{a_n A}{n^d |t-t^\prime|^{1/3}} \|H\|_\infty \Big) n^{d-2} g_d (n) \Big\}\\
		&\leq C_5 \Big(\frac{a_n^3}{n^{2d}} + n^{d-2} g_d (n)\Big)
	\end{aligned}
	\end{equation}
		for some constant $C_5 = C_5 (C_0,\lambda,d,\rho_*,H,A,T)$. Above, we used the trivial bound
		\[\int \Gamma_n^{\rm r} (\sqrt{f}) d \nu^n_{\rho_*} \leq C n^d\]
		for any $\nu^n_{\rho_*}$-density $f$. Finally, by \eqref{eqn 4} and \eqref{eqn 5}, the limit in \eqref{eqn 3} is bounded by
		\[\lim_{n \rightarrow \infty} C_6 \Big( \frac{a_n}{n^d} + \frac{n^{2d-2} g_d (n)}{a_n^2}\Big) = 0\]
		for some constant $C_6 = C_6 (C_0,\lambda,d,\rho_*,H,A,T)$, thus concluding the proof.
\end{proof}

By Lemmas \ref{lem: super-esponential estimate 1} and \ref{lem: super-esponential estimate 2}, we can replace the first term inside the brace on the right-hand side of \eqref{eqn 6} by $\<\mu^n_s, (\Delta + F^\prime (\rho_*)) H_s\>$,  replace the second one by $\chi (\rho_*) \sum_{i=1}^d \|\partial_{u_i} H_s\|_{L^2 (\T^d)}^2$,  and the last one by $\tfrac{G(\rho_*)}{2} \|H_s\|_{L^2 (\T^d)}^2$.
Moreover, the time integral of the third one  is super-exponentially small.  Therefore, we can rewrite the exponential martingale as 
\begin{equation}\label{exp martingale}
	\begin{aligned}
		\mathcal{M}^n_t (H) = \exp \Big\{&  \frac{a_n^2}{n^d}  \Big( \<\mu^n_t,H_t\> - \<\mu^n_0,H_0\> - \int_0^t \<\mu^n_s,(\partial_s + \Delta + F^\prime (\rho_*)) H_s\>  ds  \\
		&- 	 \chi (\rho_*) \sum_{i=1}^d \|\partial_{u_i} H\|_{L^2 ([0,t] \times \T^d)}^2 - \frac{G(\rho_*)}{2} \|H\|_{L^2 ([0,t] \times \T^d)}^2 + \int_0^t R^n _s (H) ds\Big) \Big\},
	\end{aligned}
\end{equation} 
where  for any $\varepsilon > 0$,
\begin{equation}\label{error}
	\lim_{n \rightarrow \infty} \frac{n^d}{a_n^2} \log \P^n_{{\rho_*}} \Big( \sup_{0 \leq t \leq T} \Big| \int_0^t R^n _s (H) ds \Big| > \varepsilon \Big) = - \infty.
\end{equation}

\subsection{Exponential tightness}\label{subsec exp tight} In this section, we prove the sequence of processes $\{\mu^n_t, 0 \leq t \leq T\}_{n \geq 1}$ is exponentially tight.  It suffices to prove the following two estimates: for any $H \in \mathcal{D} (\T^d)$, for any $\varepsilon > 0$,
\begin{align}
	\lim_{M \rightarrow \infty} \limsup_{n \rightarrow \infty} \frac{n^d}{a_n^2} \log \P^n_{\rho_*} \Big( \sup_{0 \leq t \leq T} \big|\<\mu^n_t,H\>\big| > M\Big) = - \infty,\label{exp tight c1}\\
	\lim_{\delta \rightarrow 0} \limsup_{n \rightarrow \infty} \frac{n^d}{a_n^2} \log \P^n_{\rho_*} \Big( \sup_{|t-s| \leq \delta} \big|\<\mu^n_t - \mu^n_s,H\>\big| > \varepsilon\Big) = - \infty.\label{exp tight c2}
\end{align}
Recall \eqref{exp martingale}. Together with \eqref{error}, it is easy to see that the three terms in the second line in \eqref{exp martingale} satisfy the above two estimates.  Thus, we only need to respectively consider the following three terms
\[\<\mu^n_0,H\>, \quad \frac{n^d}{a_n^2} \log \mathcal{M}^n_t (H), \quad  \int_0^t \<\mu^n_s,( \Delta + F^\prime (\rho_*)) H_s\>  ds.\]

For the initial term, since $\nu^n_{\rho_*}$ is a product measure, by Taylor's expansion,
\begin{equation}\label{initial exp}
	\begin{aligned}
		\lim_{n \rightarrow \infty} \frac{n^d}{a_n^2} \log \E^n_{\rho_*} \Big[ \exp \Big\{ \frac{a_n^2}{n^d}\<\mu^n_0,H \>  \Big\}\Big] &= 	\lim_{n \rightarrow \infty} \frac{n^d}{a_n^2} \sum_{x \in \T_n^d} \log E_{\nu^n_{\rho_*}} \Big[ \exp \Big\{  \frac{a_n}{n^d} \bar{\eta}_x H (\tfrac{x}{n}) \Big\} \Big] \\
		&= \lim_{n \rightarrow \infty} \frac{n^d}{a_n^2} \sum_{x \in \T_n^d} \log \Big(1+ \frac{a_n^2 \chi (\rho_*)}{2n^{2d}} H^2 \big(\tfrac{x}{n}\big) + \mathcal{O}_H \big(\frac{a_n^3}{n^{3d}}\big)\Big)\\
		&= \frac{\chi(\rho_*)}{2} \|H\|_{L^2 (\T^d)}^2.
	\end{aligned}
\end{equation}
By Markov's inequality, the term $\<\mu^n_0,H\>$ satisfies the estimate in \eqref{exp tight c1}.

Now, we deal with the martingale term.  For the estimate in \eqref{exp tight c1}, by Markov's inequality, we only need to show that, there exists some constant $C$ such that
\[\limsup_{n \rightarrow \infty} \frac{n^d}{a_n^2} \log \E^n_{\rho_*} \Big[ \sup_{0 \leq t \leq T} \exp \{ | \log \mathcal{M}^n_t (H) | \}\Big] \leq C.\]
As before, we can remove the above absolute value, and only need to prove that
\[\limsup_{n \rightarrow \infty} \frac{n^d}{a_n^2} \log \E^n_{\rho_*} \Big[ \sup_{0 \leq t \leq T} \mathcal{M}^n_t (H) \Big] \leq C.\]
By Doob's inequality and \eqref{exp martingale}, the last limit is bounded by
\[\limsup_{n \rightarrow \infty} \frac{n^d}{a_n^2} \log \E^n_{\rho_*} \Big[ \big( \mathcal{M}^n_T (H) \big)^2\Big] \leq \limsup_{n \rightarrow \infty} \frac{n^d}{a_n^2} \log \E^n_{\rho_*} \Big[ \mathcal{M}^n_T (2H) \exp \Big\{ \frac{C(H,\rho_*)a_n^2}{n^d} \Big\}\Big] \leq C (H,\rho_*).\]
For the estimate in \eqref{exp tight c2}, we need to show that, for any $\varepsilon > 0$,
\begin{equation}\label{mart c2}
\lim_{\delta \rightarrow 0} \limsup_{n \rightarrow \infty} \frac{n^d}{a_n^2} \log \P^n_{\rho_*} \Big( \sup_{|t-s| \leq \delta, \atop 0 \leq s,t \leq T} \Big|\frac{n^d}{a_n^2} \log \frac{\mathcal{M}^n_t (H)}{\mathcal{M}^n_s (H)}\Big| > \varepsilon\Big) = - \infty.
\end{equation}
Since
\[\sup_{0 \leq t \leq T} \Big|\frac{n^d}{a_n^2} \log \frac{\mathcal{M}^n_t (H)}{\mathcal{M}^n_{t-} (H)}\Big| \leq \frac{C(H)}{a_n},\]
we have 
\[\Big\{ \sup_{|t-s| \leq \delta, \atop 0 \leq s,t \leq T} \Big|\frac{n^d}{a_n^2} \log \frac{\mathcal{M}^n_t (H)}{\mathcal{M}^n_s (H)}\Big| > \varepsilon\Big\} \subset \bigcup_{k=0}^{[T/\delta]} \Big\{  \sup_{k \delta \leq t \leq (k+1)\delta} \Big|\frac{n^d}{a_n^2} \log \frac{\mathcal{M}^n_t (H)}{\mathcal{M}^n_{k\delta} (H)}\Big| > \varepsilon/4\Big\}.\]
Thus, we only need to show that
\[\lim_{\delta \rightarrow 0}\, \max_{0 \leq k \leq [T/\delta]} \,\limsup_{n \rightarrow \infty} \frac{n^d}{a_n^2} \log \P^n_{\rho_*} \Big( \sup_{k \delta \leq t \leq (k+1)\delta} \Big|\frac{n^d}{a_n^2} \log \frac{\mathcal{M}^n_t (H)}{\mathcal{M}^n_{k\delta} (H)}\Big| > \varepsilon/4\Big) = - \infty.\]
By Markov's inequality and Doob's inequality, it suffices to show that, for any $A > 0$, 
\begin{align*}
&\lim_{\delta \rightarrow 0}\, \max_{0 \leq k \leq [T/\delta]} \,\limsup_{n \rightarrow \infty} \frac{n^d}{a_n^2} \log \E^n_{\rho_*} \Big[  \sup_{k \delta \leq t \leq (k+1)\delta} \Big| \frac{\mathcal{M}^n_t (H)}{\mathcal{M}^n_{k\delta} (H)}\Big|^A \Big] \\
\leq& \lim_{\delta \rightarrow 0}\, \max_{0 \leq k \leq [T/\delta]} \,\limsup_{n \rightarrow \infty} \frac{n^d}{a_n^2} \log \E^n_{\rho_*} \Big[   \Big| \frac{\mathcal{M}^n_{(k+1) \delta} (H)}{\mathcal{M}^n_{k\delta} (H)}\Big|^A \Big] = 0.
\end{align*}
We finish the proof by observing that
\[\E^n_{\rho_*} \Big[  \Big| \frac{\mathcal{M}^n_{(k+1) \delta} (H)}{\mathcal{M}^n_{k\delta} (H)}\Big|^A \Big] \leq \E^n_{\rho_*} \Big[  \frac{\mathcal{M}^n_{(k+1) \delta} (AH)}{\mathcal{M}^n_{k\delta} (AH)} \exp \Big\{  \frac{C(H,A) \delta a_n^2}{n^d}  \Big\} \Big] = \exp \Big\{  \frac{C(H,A) \delta a_n^2}{n^d}  \Big\}.\]

For the term $ \int_0^t \<\mu^n_s,( \Delta + F^\prime (\rho_*)) H\>  ds$, we have the following result.

\begin{lemma}\label{lem: estimate for  time int }
For any $H \in \mathcal{D} (\T^d)$,  and for any $\varepsilon > 0$,
\begin{align}
	\lim_{M \rightarrow \infty} \limsup_{n \rightarrow \infty} \frac{n^d}{a_n^2} \log \P^n_{\rho_*} \Big( \sup_{0 \leq t \leq T} \big|\int_0^t \<\mu^n_r,H\> dr \big| > M\Big) = - \infty,\label{exp tight c3}\\
	\lim_{\delta \rightarrow 0} \limsup_{n \rightarrow \infty} \frac{n^d}{a_n^2} \log \P^n_{\rho_*} \Big( \sup_{|t-s| \leq \delta} \big|\int_s^t \<\mu^n_r,H\> d r\big| > \varepsilon\Big) = - \infty.\label{exp tight c4}
\end{align}
\end{lemma}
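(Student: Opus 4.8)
The plan is to follow, essentially line by line, the argument that establishes Lemma \ref{lem: super-esponential estimate 1}: the quantity
\[\int_0^t \langle\mu^n_r,H\rangle\,dr \;=\; \int_0^t \frac{1}{a_n}\sum_{x\in\T_n^d}\bar{\eta}_x(r)\,H(\tfrac{x}{n})\,dr\]
is exactly of the form treated in \eqref{superexp estimate 1}, but now at the \emph{critical} scale $r_n=a_n$ rather than $r_n\gg a_n$. First I would set $g(t)=\int_0^t\langle\mu^n_r,H\rangle\,dr$ and apply the Garsia--Rodemich--Rumsey inequality (Lemma \ref{lem: GRR inequality}) with $\psi(u)=u^{2q}$ and $p(u)=u^{1/3}$ for a fixed $q>3$, exactly as there. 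This removes the supremum over time: it yields the pathwise bound $\sup_{0\le t\le T}|g(t)|\le C(q,T)\,B^{1/(2q)}$, and, applying the GRR bound over time windows of width $\delta$, also $\sup_{|t-s|\le\delta}|g(t)-g(s)|\le C(q,T)\,\delta^{1/3-1/q}\,B^{1/(2q)}$, where $B=\int_0^T\!\!\int_0^T\big(|t-t'|^{-1/3}|g(t)-g(t')|\big)^{2q}\,dt\,dt'$ and the exponent $1/3-1/q>0$ comes from $\int_0^\delta u^{-1/q-2/3}\,du$.

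Second, I would reproduce the chain of estimates in the proof of Lemma \ref{lem: super-esponential estimate 1} from \eqref{eqn 2} onward: the convexity reduction via $f_q$, the Feynman--Kac bound (Lemma \ref{lem: feynman-kac formula}), the explicit formula for $L_n^*\mathbf{1}$, Proposition \ref{prop: main lemma}, the choice of $\lambda_c$ guaranteeing \eqref{lambda condition}, and Young's inequality. The only change is that $r_n$ is replaced by $a_n$; hence in \eqref{eqn 7} the first term still vanishes because $n^{d-1}\sqrt{g_d(n)}\ll a_n$ (Assumption \ref{assump: a n}), while the second term, previously $o(1)$ under $a_n\ll r_n$, now equals the constant $C_3(2T)^{1/3}A^2/\varepsilon_0=:c_0A^2$ with $c_0=c_0(\rho_*,a,b,\lambda,d,H,T)>0$. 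This yields, for every $A>0$,
\[\limsup_{n\to\infty}\frac{n^d}{a_n^2}\log\E^n_{\rho_*}\Big[\exp\Big\{\tfrac{a_n^2 A}{n^d}\big(B/T^2\big)^{1/(2q)}\Big\}\Big]\;\le\;c_0A^2.\]

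Third, I would conclude by Markov's inequality. For \eqref{exp tight c3}, Step 1 and Markov give, for all $A,M>0$,
\[\frac{n^d}{a_n^2}\log\P^n_{\rho_*}\Big(\sup_{0\le t\le T}|g(t)|>M\Big)\;\le\;-c_1AM+\frac{n^d}{a_n^2}\log\E^n_{\rho_*}\Big[\exp\Big\{\tfrac{a_n^2 A}{n^d}\big(B/T^2\big)^{1/(2q)}\Big\}\Big]\]
for some $c_1=c_1(q,T)>0$; taking $\limsup_n$ and using Step 2 yields the bound $-c_1AM+c_0A^2$, and optimizing over $A$ (take $A=c_1M/(2c_0)$) gives $-c_1^2M^2/(4c_0)$, which tends to $-\infty$ as $M\to\infty$. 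For \eqref{exp tight c4}, the identical computation with the window bound of Step 1 and the fixed choice $A=1$ gives the bound $-\tilde c_1\,\varepsilon\,\delta^{-(1/3-1/q)}+c_0$, which tends to $-\infty$ as $\delta\to0$ since $1/3-1/q>0$.

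The only genuinely new point, and hence the one I expect to require care, is precisely this behaviour at the critical scale $r_n=a_n$: unlike in Lemma \ref{lem: super-esponential estimate 1} one cannot make the exponential moment vanish, so one cannot simply let $A\to\infty$; instead one keeps the quadratic constant $c_0A^2$ and uses that the deviation level in \eqref{exp tight c3}--\eqref{exp tight c4} (namely $M$, respectively $\varepsilon\,\delta^{-(1/3-1/q)}$) diverges, which is exactly what forces both limits to be $-\infty$. Everything else is a transcription of arguments already carried out in this section.
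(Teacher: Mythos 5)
Your proposal is correct and follows essentially the same route as the paper: Garsia--Rodemich--Rumsey to remove the supremum, then the Feynman--Kac/main-lemma chain of Lemma \ref{lem: super-esponential estimate 1} with $r_n=a_n$, observing that the exponential moment of $B^{1/(2q)}$ is now only bounded by a constant rather than $o(1)$, and concluding via Markov because the deviation level ($M$, resp.\ $\varepsilon\delta^{-(1/3-1/q)}$) diverges. The paper leaves the final Markov step implicit and does not bother optimizing over $A$ (a fixed $A$ already suffices since $M\to\infty$), but your explicit treatment, including the identification of the quadratic constant $c_0A^2$ coming from the Young-inequality term in \eqref{eqn 7}, is a faithful and slightly more detailed rendering of the same argument.
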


\begin{proof}
	We only prove the second identity and the first one could be proved in the same way.  As in the proof of Lemma \ref{lem: super-esponential estimate 1}, by  Garsia-Rodemich-Rumsey inequality (see Lemma \ref{lem: GRR inequality}), 
	\[\sup_{|t-s| \leq \delta} \big|\int_s^t \<\mu^n_r,H\> d r\big| \leq C_1 (q) \delta^{\tfrac{1}{3} - \tfrac{1}{q}} B^{\tfrac{1}{2q}},\]
	where $q > 3$ is fixed, and
	\[B= \int_0^T dt \int_0^T dt^\prime \Big( \frac{1}{|t-t^\prime|^{1/3}} \Big| \int_t^{t^\prime} \<\mu^n_r,H\> dr\Big|\Big).\]
	Similarly to the proof of \eqref{eqn 2}, we only need to prove that, there exists some constant $C$ such that  
	\[	\lim_{n \rightarrow \infty} \frac{n^d}{a_n^2} \log \Big(  \frac{1}{T^2}  \int_0^T dt \int_0^T dt^\prime \,\E^n_{\rho_*} \Big[  \exp \Big\{ \frac{a_n A}{n^d |t-t^\prime|^{1/3} } \big| \int_t^{t^\prime} \sum_{x \in \T_n^d} \bar{\eta}_x (s) H (\tfrac{x}{n}) ds \big|  \Big\}\Big]\Big) \leq C.\]
	Note that the above expression is the same as the one in \eqref{eqn 2} with $A = 1, r_n =a_n$.  By \eqref{eqn 7}, the limit in the last inequality is bounded by 
	\[\lim_{n \rightarrow \infty} C(C_0,d,T,\lambda,\rho_*,\varepsilon_0) \Big(\frac{n^{2d-2} g_d (n)}{a_n^2} + 1\Big) \leq  C(C_0,d,T,\lambda,\rho_*,\varepsilon_0),\]
	thus concluding the proof.
\end{proof}

\subsection{Proof of the upper bound}\label{subsec pf upper} By the exponential tightness of the sequence $\{\mu^n_\cdot\}$, we only need to prove that for any compact set $K \subset D([0,T],\mathcal{D}^\prime (\T^d))$, 
\begin{equation}\label{eqn 9}
	\limsup_{n \rightarrow \infty} \frac{n^d}{a_n^2} \log \P^n_{\rho_*} \Big(\mu^n_\cdot \in K\Big) \leq  - \inf_{\mu \in K} \mathcal{Q}_T (\mu).
\end{equation}
For any $\delta > 0$, let $\mathcal{B}_\delta$ be the event that 
\[\Big|\int_0^T R^n_s (H) ds \Big| \leq \delta.\]
Then, by \eqref{error} and \eqref{exp martingale}, the $\limsup$ in \eqref{eqn 9} is bounded by
\begin{align*}
	&\limsup_{n \rightarrow \infty} \frac{n^d}{a_n^2} \log \P^n_{\rho_*} \Big(\mu^n_\cdot \in K, \mathcal{B}_\delta\Big)\\
	&= \limsup_{n \rightarrow \infty} \frac{n^d}{a_n^2} \log \E^n_{\rho_*} \Big[ \mathcal{M}^n_T (H)^{-1} \mathcal{M}^n_H (H) \mathbf{1} \{ \mu^n_\cdot \in K, \mathcal{B}_\delta \} \Big]\\
	& \leq - \inf_{\mu \in K} \Big\{\ell_T (\mu,H) - 	 \chi (\rho_*) \sum_{i=1}^d \|\partial_{u_i} H\|_{L^2 ([0,T] \times \T^d)}^2 - \frac{G(\rho_*)}{2} \|H\|_{L^2 ([0,T] \times \T^d)}^2 + \langle \mu_0,\phi \rangle\Big\} \\
	&+ \limsup_{n \rightarrow \infty} \frac{n^d}{a_n^2} \log \E^n_{\rho_*} \Big[\exp \Big\{ \langle \mu^n_0,\phi \rangle\Big\}\Big] + \delta
\end{align*}
for any $\phi \in \mathcal{D} (\T^d)$.  By \eqref{initial exp}, the $\limsup$ in the last line equals $\tfrac{\chi (\rho_*)}{2} \|\phi\|_{L^2 (\T^d)}^2$.  Letting $\delta \rightarrow 0$ and then optimizing over $H \in C^{1,\infty} ([0,T] \times \R^d), \phi \in \mathcal{D} (\T^d)$, we have 
\begin{multline*}
\limsup_{n \rightarrow \infty} \frac{n^d}{a_n^2} \log \P^n_{\rho_*} \Big(\mu^n_\cdot \in K\Big) \leq  - \sup_{H \in C^{1,\infty} ([0,T] \times \R^d), \phi \in \mathcal{D} (\T^d)} \inf_{\mu \in K} \Big\{\ell_T (\mu,H) \\
- 	 \chi (\rho_*) \sum_{i=1}^d \|\partial_{u_i} H\|_{L^2 ([0,T] \times \T^d)}^2 
- \frac{G(\rho_*)}{2} \|H\|_{L^2 ([0,T] \times \T^d)}^2 + \langle \mu_0,\phi \rangle - \frac{\chi (\rho_*)}{2} \|\phi\|_{L^2 (\T^d)}^2\Big\}.
\end{multline*}
Note that the term inside the above brace is continuous in $\mu,H,\phi$, linear in $\mu$ and concave in $H,\phi$.  By the Minimax theorem \cite[Page 584]{gao2003moderate}, we can exchange the order of the above $\sup$ and $\inf$, thus concluding the proof of the upper bound.

\section{The lower bound}\label{sec: lower bound}

In subsection \ref{subsec perturbed dynamics}, we state  hydrodynamic limits  for a perturbed dynamics corresponding to the exponential martingale constructed previously in Subsection \ref{subsec:exp martingale}. The proof of this hydrodynamic limits is presented in Subsection \ref{sec: pf hydrodynamics}. The straightforward approach is to use the entropy method, see \cite[Chapter 5]{klscaling} for example. However, for the entropy method, one of the steps is to prove the absolute continuity of the limiting point with respect to the Lebesgue measure, which we are not aware of how to prove in this case. For this reason, we adopt the relative entropy method, where we need another version of main lemma, see Proposition \ref{lem main lemma 2}.  With the above hydrodynamic limits, we prove the lower bound in Subsection \ref{subsec: pf lower bound}.
 
\subsection{Hydrodynamic limits for the perturbed dynamics}\label{subsec perturbed dynamics} For any $\phi \in \mathcal{D} (\T^d)$, define $\nu^n_{\phi,\rho_*} $ as the product measure on $\Omega_n$ with marginals 
\[\nu^n_{\phi,\rho_*} (\eta_x = 1) = \rho_* + \frac{a_n}{n^d} \phi \Big(\frac{x}{n}\Big), \quad x \in \T_n^d.\]
For any $H \in C^{1,\infty} ([0,T] \times \T^d)$, define
\[ \frac{d \P^n_{H,\phi}}{d  \P^n_{{\rho_*}}}= \mathcal{M}^n_T (H) \frac{d \nu^n_{\phi, \rho_*}}{d \nu^n_{\rho_*}}.\]
Under $\P^n_{H,\phi}$, $(\eta (t))_{t \geq 0}$ is a time-inhomogeneous Markov process on the state space $\Omega_n$ with generator $L_{n,t}^H$ and with initial distribution $\nu^n_{\phi,\rho_*}$,  where for $f: \Omega_n \rightarrow \R$,
\begin{align*}
L_{n,t}^H f (\eta) &= n^2 \sum_{x \in \T_n^d} \sum_{i=1}^d  \exp \big\{   \tfrac{a_n}{n^d} (\eta_x - \eta_{x+e_i}) \big( H_t(\tfrac{x+e_i}{n}) - H_t (\tfrac{x}{n})\big)  \big\}  \big[ f(\eta^{x,x+e_i}) - f(\eta) \big] \\
&+ \sum_{x \in \T_n^d} c_x (\eta) \exp \big\{   \tfrac{a_n}{n^d} (1-2\eta_x) H_t (\tfrac{x}{n})  \big\} \big[ f(\eta^{x}) - f(\eta) \big],
\end{align*}
see \cite[Appendix 1, Proposition 7.3]{klscaling} for example.

The following result concerns hydrodynamic limits for $\mu^n_t$ in the above perturbed dynamics.  It can also be regarded as an equilibrium perturbation of the dynamics, see \cite{xu2023equilibrium} for example.

\begin{proposition}\label{pro: hydrodynamic limits}
Under $\P^n_{H,\phi}$, the sequence $\{\mu^n_t, 0 \leq t \leq T\}_{n \geq 1}$ converges in probability to the deterministic measure $\{\rho(t,u)du, 0 \leq t \leq T\}$, where $\rho (t,u)$ is the unique weak solution to the following PDE:
\begin{equation}\label{hydrodynamic equation}
\begin{cases}
\partial_t  \rho (t,u) = (\Delta + F^\prime (\rho_*)) \rho (t,u) - 2 \chi (\rho_*) \Delta H (t,u) + G(\rho_*) H (t,u), \quad t > 0, u \in \T^d,\\
\rho (0,u) = \phi (u), \quad u \in \T^d.
\end{cases}
\end{equation}
\end{proposition}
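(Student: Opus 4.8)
The plan is to establish the proposition by the relative entropy method of Jara and Menezes, taking as reference the time-dependent product measure $\nu^n_t := \nu^n_{\rho(t,\cdot),\rho_*}$, whose one-site marginals are $\rho_* + \frac{a_n}{n^d}\rho(t,\tfrac{x}{n})$, where $\rho = \rho(t,u)$ is the solution of \eqref{hydrodynamic equation}. First I would check that \eqref{hydrodynamic equation} is well posed: it is a linear parabolic equation with constant-coefficient principal part $\Delta$, bounded zero-order coefficient $F^\prime(\rho_*)$, smooth forcing $-2\chi(\rho_*)\Delta H + G(\rho_*)H$ and smooth datum $\phi$, so Duhamel's formula against the heat semigroup on $\T^d$ gives a solution $\rho \in C^{1,\infty}([0,T]\times\T^d)$, smooth in both variables; two weak solutions have difference solving the homogeneous equation with zero data, so $t \mapsto \|\rho_1(t) - \rho_2(t)\|_{L^2(\T^d)}^2$ satisfies a closed Gronwall inequality and hence vanishes. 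The smoothness of $\rho$ is what the discrete expansions below require.

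Let $f^n_t$ be the density of the law of $\eta(t)$ under $\P^n_{H,\phi}$ with respect to $\nu^n_t$, and $\mathrm{H}_n(t) := \int f^n_t \log f^n_t\, d\nu^n_t$ the relative entropy; since the initial law equals $\nu^n_{\phi,\rho_*} = \nu^n_0$, we have $\mathrm{H}_n(0) = 0$. The heart of the argument is the estimate
\[
\sup_{0 \le t \le T} \mathrm{H}_n(t) \;=\; o\Big( \frac{a_n^2}{n^d} \Big).
\]
Granting it, the proposition follows in a standard way. By the entropy inequality, for $G \in C^{1,\infty}([0,T]\times\T^d)$, $\delta > 0$, with $A^n_t := \{\, |\<\mu^n_t, G_t\> - \int_{\T^d}\rho(t,u) G_t(u)\,du| > \delta \,\}$,
\[
\P^n_{H,\phi}\big(A^n_t\big) \;\le\; \frac{\log 2 + \mathrm{H}_n(t)}{\log\big(1 + 1/\nu^n_t(A^n_t)\big)};
\]
under $\nu^n_t$ the variable $\<\mu^n_t,G_t\> = \frac{1}{a_n}\sum_x \bar\eta_x G_t(\tfrac{x}{n})$ is a sum of independent summands each bounded by $\|G\|_\infty/a_n$ and of total mean $\frac{1}{n^d}\sum_x \rho(t,\tfrac{x}{n}) G_t(\tfrac{x}{n}) \to \int_{\T^d} \rho G$, so Hoeffding's inequality yields $\nu^n_t(A^n_t) \le 2\exp\{-c\,\delta^2 a_n^2/n^d\}$ and the displayed bound tends to $0$. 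This is pointwise in $t$; combined with exponential tightness of $\{\mu^n_\cdot\}_n$ — proved under $\P^n_{\rho_*}$ in Subsection \ref{subsec exp tight} and inherited by $\P^n_{H,\phi}$, because $\E^n_{\rho_*}[(d\P^n_{H,\phi}/d\P^n_{\rho_*})^p] \le \exp\{O(a_n^2/n^d)\}$ for every $p \ge 1$ (by the martingale property of $\mathcal{M}^n_\cdot(pH)$ and a R\'enyi-divergence bound for $\nu^n_{\phi,\rho_*}$ against $\nu^n_{\rho_*}$) — it upgrades to convergence in probability in $D([0,T],\mathcal{D}^\prime(\T^d))$ towards the deterministic, time-continuous trajectory $\rho(t,u)\,du$.

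For the entropy estimate I would use Yau's inequality: there is an explicit local function $W^n_t(\eta)$, built from the adjoint of $L_{n,t}^H$ in $L^2(\nu^n_t)$ and the correction $-\partial_t \log(d\nu^n_t/d\nu^n_{\rho_*})$, with
\[
\partial_t \mathrm{H}_n(t) \;\le\; \int W^n_t\, f^n_t\, d\nu^n_t \;-\; D_n(t),
\]
$D_n(t)$ being a fixed positive multiple of $n^2 \int \Gamma_n^{\rm ex}(\sqrt{f^n_t})\,d\nu^n_t + \int \Gamma_n^{\rm r}(\sqrt{f^n_t})\,d\nu^n_t$. Since $a_n \ll n^d$, expanding the exponential weights of $L_{n,t}^H$ by Taylor's formula as in Subsection \ref{subsec:exp martingale} and computing $L^*\mathbf{1}$ as in the proof of Lemma \ref{lem: super-esponential estimate 1} splits $W^n_t$ into: (i) a part of degree at most one in the $\bar\eta_x$, whose leading term is $\frac{a_n}{n^d}$ times the discretisation of $-\partial_t\rho + \Delta\rho + F^\prime(\rho_*)\rho - 2\chi(\rho_*)\Delta H + G(\rho_*)H$ — this is precisely the reason the profile must solve \eqref{hydrodynamic equation}, and with $\rho$ so chosen what remains there is $\frac{a_n}{n^d}$ times quantities of order $n^{-2}$; (ii) degree-two terms $V(h) = \sum_x\sum_{i=1}^d \bar\eta_x\bar\eta_{x+e_i}h^i(x)$, one with $\|h\|_\infty = O(1)$ from $L^*_{n,t}\mathbf{1}$ and the time-derivative of the profile, the others with $\|h\|_\infty = O(a_n^{-1})$ from the reaction nonlinearity and from the expansion of $(\eta_x - \eta_{x+e_i})^2$; (iii) a remainder of size $O(a_n/n^d)$ per site. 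With $\rho$ solving \eqref{hydrodynamic equation}, parts (i) and (iii), after the elementary bound $\int g\, f^n_t\, d\nu^n_t \le \gamma\,\mathrm{H}_n(t) + \gamma^{-1}\log\int e^{\gamma g}\,d\nu^n_t$ with $\gamma$ fixed, contribute only $C\,\mathrm{H}_n(t) + o(a_n^2/n^d)$. For (ii) I would invoke the second main lemma, Proposition \ref{lem main lemma 2} — the analogue of Proposition \ref{prop: main lemma} with reference measure $\nu^n_t$ — which bounds each $\int V(h)\, f^n_t\, d\nu^n_t$ by $\tfrac14 n^2\int\Gamma_n^{\rm ex}(\sqrt{f^n_t})\,d\nu^n_t + C_0\kappa(\rho_*)\mathcal{A}(\|h\|_\infty)\int\Gamma_n^{\rm r}(\sqrt{f^n_t})\,d\nu^n_t + C_0\mathcal{A}(\|h\|_\infty)\,n^{d-2}g_d(n)$; the exclusion Dirichlet forms are absorbed by $D_n(t)$, the reaction Dirichlet forms are absorbed once $\lambda_c$ is chosen so that $C_0\kappa(\rho_*)\mathcal{A}(\tfrac{\lambda}{2d\rho_*}) < 1$ with a fixed margin (condition \eqref{lambda condition}, the $O(a_n^{-1})$ coefficients costing nothing extra), and the residual errors sum to $O(n^{d-2}g_d(n))$. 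Collecting everything, $\partial_t\mathrm{H}_n(t) \le C\,\mathrm{H}_n(t) + C\,n^{d-2}g_d(n) + o(a_n^2/n^d)$, so Gronwall and $\mathrm{H}_n(0) = 0$ give $\sup_{[0,T]}\mathrm{H}_n(t) \le C\,n^{d-2}g_d(n) + o(a_n^2/n^d)$, which is $o(a_n^2/n^d)$ exactly because of the requirement $n^{d-1}\sqrt{g_d(n)} \ll a_n$ in Assumption \ref{assump: a n}.

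The step I expect to be the main obstacle is item (ii): one must apply the second main lemma so that, after absorption, the coefficient left in front of the reaction Dirichlet form stays strictly below its budget — which forces $|\lambda|$ to be small and is the true origin of the threshold $\lambda_c$ in Theorem \ref{thm mdp} — while at the same time the leftover error $n^{d-2}g_d(n)$ must remain negligible on the entropy scale $a_n^2/n^d$, which is exactly what Assumption \ref{assump: a n} is designed to guarantee. It is also worth recording why the plain entropy method is not used here: identifying macroscopic limit points of $\mu^n_\cdot$ through a local replacement would require an a priori regularity statement for those limit points that does not seem available, whereas prescribing the profile $\rho$ from the start removes the difficulty.
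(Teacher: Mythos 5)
Your proposal follows essentially the same route as the paper: the relative entropy method with the time-dependent product reference measure $\nu^n_t$ of slowly varying profile $\rho_* + \tfrac{a_n}{n^d}\rho(t,\cdot)$, Yau's inequality, the second main lemma (Proposition \ref{lem main lemma 2}) to absorb the order-one quadratic term into the Dirichlet forms, Gr\"onwall to get $\mathcal{H}_n(t) \ll a_n^2/n^d$, and then the entropy inequality plus tightness inherited from $\P^n_{\rho_*}$ to conclude. The only (immaterial) difference is that you control the lower-order error terms by the elementary entropy bound, whereas the paper transfers the super-exponential estimates of Lemmas \ref{lem: super-esponential estimate 1} and \ref{lem: super-esponential estimate 2} to $\P^n_{H,\phi}$ via Lemma \ref{lem 1}; both work.
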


\subsection{Proof of the lower bound}\label{subsec: pf lower bound} We first introduce some properties of the rate function $\mathcal{Q}_T$.  For any $H,J \in C^{1,\infty} ([0,T] \times \T^d)$, define the scalar product $[\cdot,\cdot]$ as
\[[H,J] := \chi (\rho_*) \sum_{i=1}^d \int_0^T \int_{\T^d} \partial_{u_i} H (t,u) \partial_{u_i} J (t,u) \,du\,dt + \frac{G(\rho_*)}{2} \int_0^T \int_{\T^d}  H (t,u)  J (t,u) \,du\,dt. \]
Let $\mathcal{H}$ be the Hilbert space defined as the completion of the space $H,J \in C^{1,\infty} ([0,T] \times \T^d)$ with respect to the above scalar product. With the above notation, we have 
\[\mathcal{Q}_{\rm dyn} (\mu) := \sup_{J \in C^{1,\infty} ([0,T] \times \T^d)} \Big\{ \ell_T (\mu,J) -[J,J]\Big\}.\]
Also recall \eqref{q 0}. Then, by Riesz representation theorem, it is easy to prove the following result. We omit the proof here and refer the readers to \cite[Lemma 5.1]{gao2003moderate} for details.

\begin{lemma}\label{lem properties rate function}
If $\mathcal{Q}_T (\mu) < + \infty$, then there exist $\phi \in L^2 (\T^d)$ and $H \in \mathcal{H}$ such that
\[\<\mu_0,\varphi\> = \<\phi,\varphi\>,\;\forall \varphi \in \mathcal{D} (\T^d); \quad \mathcal{Q}_0 (\mu_0) = \frac{1}{2 \chi(\rho_*)} \|\phi\|_{L^2 (\T^d)}^2,\]
and that
\[\ell_T (\mu, J) = 2 [H,J], \;\forall J \in C^{1,\infty} ([0,T] \times \R^d); \quad \mathcal{Q}_{\rm dyn} (\mu) = [H,H].\]
\end{lemma}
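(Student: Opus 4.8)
The plan is a standard duality (Riesz representation) argument, applied separately to $\mathcal{Q}_0$ and to $\mathcal{Q}_{\rm dyn}$; no estimate on the particle system is needed. First I would observe that both $\mathcal{Q}_0$ and $\mathcal{Q}_{\rm dyn}$ are nonnegative, since the supremums defining them include the zero test function; hence $\mathcal{Q}_T(\mu) = \mathcal{Q}_0(\mu_0) + \mathcal{Q}_{\rm dyn}(\mu) < +\infty$ forces $c_0 := \mathcal{Q}_0(\mu_0) < \infty$ and $c_1 := \mathcal{Q}_{\rm dyn}(\mu) < \infty$ separately.

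For the initial part, the key step is a homogeneity trick: replacing $\phi$ by $\pm t\phi$ in \eqref{q 0} and optimizing over $t>0$ yields $|\langle \mu_0,\phi\rangle| \leq \sqrt{2 c_0\,\chi(\rho_*)}\,\|\phi\|_{L^2(\T^d)}$ for every $\phi \in \mathcal{D}(\T^d)$. Since $\mathcal{D}(\T^d)$ is dense in $L^2(\T^d)$, the linear functional $\phi \mapsto \langle \mu_0,\phi\rangle$ extends uniquely to a bounded linear functional on $L^2(\T^d)$, so by the Riesz representation theorem there is a (unique) $\phi \in L^2(\T^d)$ with $\langle \mu_0,\varphi\rangle = \langle \phi,\varphi\rangle_{L^2}$ for all $\varphi \in \mathcal{D}(\T^d)$. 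Substituting this back into \eqref{q 0}, using density once more, and completing the square gives $\mathcal{Q}_0(\mu_0) = \sup_{\psi \in L^2(\T^d)}\{\langle \phi,\psi\rangle_{L^2} - \tfrac{\chi(\rho_*)}{2}\|\psi\|_{L^2}^2\} = \tfrac{1}{2\chi(\rho_*)}\|\phi\|_{L^2(\T^d)}^2$, the maximizer being $\psi = \phi/\chi(\rho_*)$.

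The dynamical part is handled in exactly the same way, with $(L^2(\T^d),\langle\cdot,\cdot\rangle_{L^2})$ replaced by the Hilbert space $(\mathcal{H},[\cdot,\cdot])$ and $\mathcal{D}(\T^d)$ replaced by $C^{1,\infty}([0,T]\times\T^d)$, which is dense in $\mathcal{H}$ by the very definition of $\mathcal{H}$ as a completion. Rescaling $J \mapsto \pm tJ$ in $\mathcal{Q}_{\rm dyn}(\mu) = \sup_J\{\ell_T(\mu,J) - [J,J]\}$ — using that $\ell_T(\mu,\cdot)$ is linear and $[\cdot,\cdot]$ is quadratic — gives $|\ell_T(\mu,J)| \leq 2\sqrt{c_1}\,\|J\|_{\mathcal{H}}$ for all $J \in C^{1,\infty}([0,T]\times\T^d)$; hence $J \mapsto \ell_T(\mu,J)$ extends to a bounded linear functional on $\mathcal{H}$, and Riesz provides $H \in \mathcal{H}$ with $\ell_T(\mu,J) = 2[H,J]$ for all such $J$ (the factor $2$ absorbed into $H$). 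Finally, for every $J$ one has $2[H,J] - [J,J] = [H,H] - [H-J,H-J] \le [H,H]$, and choosing a sequence $J_k \in C^{1,\infty}([0,T]\times\T^d)$ with $J_k \to H$ in $\mathcal{H}$ shows the supremum equals $[H,H]$, i.e. $\mathcal{Q}_{\rm dyn}(\mu) = [H,H]$.

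The only points requiring care — the mild obstacles here — are: (i) the passage from finiteness of the Legendre-type supremum to a genuine norm bound on the linear functional, which is precisely the rescaling/homogeneity step above; (ii) the density of $C^{1,\infty}([0,T]\times\T^d)$ in $\mathcal{H}$, which guarantees that the extension of $\ell_T(\mu,\cdot)$, its Riesz representative, and the value of the supremum are all computed consistently, and which is immediate from the construction of $\mathcal{H}$; and (iii) keeping track of the factor $2$ so that $\ell_T(\mu,\cdot) = 2[H,\cdot]$ pairs correctly with $\mathcal{Q}_{\rm dyn}(\mu) = [H,H]$. Everything else is routine Hilbert-space calculus, which is why the argument can be safely referred to \cite[Lemma 5.1]{gao2003moderate}.
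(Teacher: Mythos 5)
Your argument is correct and is exactly the Riesz-representation/duality argument the paper has in mind: the paper omits the proof and refers to \cite[Lemma 5.1]{gao2003moderate}, which proceeds by the same rescaling-to-get-a-norm-bound, density, and completing-the-square steps you spell out. Nothing further is needed.
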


Now, we prove the lower bound. For any open set $O \in D([0,T],\mathcal{D}^\prime (\T^d))$, it suffices to show that for any $\mu \in O$,
\begin{equation}
	\liminf_{n \rightarrow \infty} \frac{n^d}{a_n^2} \log \P^n_{\rho_*} \Big( \mu^n_\cdot \in O \Big) \geq - \mathcal{Q}_T (\mu).
\end{equation}
Using an approximation procedure, we can assume that $\mu (t,du) = \rho (t,u) du $ with $ \rho (t,u) \in C^{1,\infty} ([0,T] \times \T^d)$ and that $H \in C^{1,\infty} ([0,T] \times \T^d), \phi \in \mathcal{D} (\T^d)$, where $H$ and $\phi$ are identified in Lemma \ref{lem properties rate function}, see \cite{gao2003moderate} for example.  Moreover, $\rho(t,u)$ is the unique weak solution to \eqref{hydrodynamic equation}.  By Jensen's inequality,
\begin{equation}\label{eqn 8}
\begin{aligned}
	&\frac{n^d}{a_n^2} \log \P^n_{\rho_*} \Big( \mu^n_\cdot \in O \Big) = \frac{n^d}{a_n^2}  \log \E^n_{H,\phi} \Big[ \mathcal{M}^n_T (H)^{-1} \frac{d \nu^n_{\rho_*}}{d \nu^n_{\phi,\rho_*}} \mathbf{1} \{ \mu^n_\cdot \in O\}\Big]\\
	&\geq \frac{n^d}{a_n^2}  \log  \P^n_{\rho_*} \Big( \mu^n_\cdot \in O\Big) + \frac{n^d}{a_n^2}   \E^n_{H,\phi} \Big[ \log \mathcal{M}^n_T (H)^{-1}  \Big| \mu^n_\cdot \in O \Big] + \frac{n^d}{a_n^2}   \E^n_{H,\phi} \Big[ \log  \frac{d \nu^n_{\rho_*}}{d \nu^n_{\phi,\rho_*}}  \Big| \mu^n_\cdot \in O \Big].
\end{aligned}
\end{equation}
By Proposition \ref{pro: hydrodynamic limits},
\[\lim_{n \rightarrow \infty} \P^n_{\rho_*} \Big( \mu^n_\cdot \in O\Big)  = 1.\]
In particular, 
\[\lim_{n \rightarrow \infty} \frac{n^d}{a_n^2}  \log  \P^n_{\rho_*} \Big( \mu^n_\cdot \in O\Big)  = 0.\]
To deal with the remaining two terms on the right-hand side of \eqref{eqn 8}, we need the following lemma, which can be proved by using the entropy inequality, see \cite[Lemma 4.3]{zhao2024moderate} for example.

\begin{lemma}\label{lem 1}
	Let $\mathcal{A} \in D([0,T],\Omega_n)$.  If
	\[\lim_{n \rightarrow \infty} \frac{n^d}{a_n^2} \log \P^n_{\rho_*} (\mathcal{A}) = - \infty,\]
	then,
	\[\lim_{n \rightarrow \infty} \P^n_{H,\phi} (\mathcal{A}) = 0.\]
\end{lemma}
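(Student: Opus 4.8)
The plan is to use the entropy inequality together with the fact that the relative entropy of $\P^n_{H,\phi}$ with respect to $\P^n_{\rho_*}$ grows at most at order $a_n^2/n^d$. First I would recall the standard entropy inequality: for any event $\mathcal{A}$ and any $\gamma > 0$,
\[
\P^n_{H,\phi} (\mathcal{A}) \leq \frac{\log 2 + \mathrm{Ent}\big(\P^n_{H,\phi} \,\big|\, \P^n_{\rho_*}\big)}{\log \big(1 + 1/\P^n_{\rho_*}(\mathcal{A})\big)},
\]
where $\mathrm{Ent}(\cdot \mid \cdot)$ denotes relative entropy. Since $d\P^n_{H,\phi}/d\P^n_{\rho_*} = \mathcal{M}^n_T (H)\, d\nu^n_{\phi,\rho_*}/d\nu^n_{\rho_*}$, the entropy splits into a contribution from the initial measures and a contribution from the martingale term.

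The key step is therefore the bound $\mathrm{Ent}(\P^n_{H,\phi} \mid \P^n_{\rho_*}) \leq C(H,\phi)\, a_n^2/n^d$. For the initial part, $\mathrm{Ent}(\nu^n_{\phi,\rho_*} \mid \nu^n_{\rho_*})$ is a sum over $x \in \T_n^d$ of relative entropies of Bernoulli$(\rho_* + \tfrac{a_n}{n^d}\phi(x/n))$ against Bernoulli$(\rho_*)$, each of which is $\mathcal{O}\big((a_n/n^d)^2\big)$ by a second-order Taylor expansion (the first-order term vanishes), so the sum is $\mathcal{O}(a_n^2/n^{2d} \cdot n^d) = \mathcal{O}(a_n^2/n^d)$. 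For the martingale part, $\E^n_{H,\phi}[\log \mathcal{M}^n_T (H)] = \E^n_{\rho_*}[\mathcal{M}^n_T(H) \log \mathcal{M}^n_T(H)]$, and using the explicit expression \eqref{exp martingale} for $\mathcal{M}^n_T(H)$ together with the a priori bounds $|\langle \mu^n_t, H_t\rangle| \leq C(H) n^d/a_n$ and the uniform control on the compensator terms (all of order $a_n^2/n^d$), one sees $\log \mathcal{M}^n_T(H) = \mathcal{O}_H(a_n^2/n^d)$ on a set of overwhelming $\P^n_{\rho_*}$-probability, and the tails are controlled since $\mathcal{M}^n_T(H)$ has finite exponential moments of the required order (as already used in Subsection \ref{subsec exp tight}); alternatively one invokes that $\E^n_{\rho_*}[\mathcal{M}^n_T(pH)] \leq \exp\{C(H,p)a_n^2/n^d\}$ for $p$ slightly larger than $1$ and applies a standard entropy-versus-exponential-moment argument. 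Combining, $\mathrm{Ent}(\P^n_{H,\phi} \mid \P^n_{\rho_*}) \leq C\, a_n^2/n^d$.

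Plugging this into the entropy inequality, the hypothesis $\tfrac{n^d}{a_n^2}\log \P^n_{\rho_*}(\mathcal{A}) \to -\infty$ means $\P^n_{\rho_*}(\mathcal{A}) = \exp\{-\omega(a_n^2/n^d)\}$, so $\log(1 + 1/\P^n_{\rho_*}(\mathcal{A})) \geq \omega(a_n^2/n^d)$, while the numerator is $\log 2 + \mathcal{O}(a_n^2/n^d)$; hence the ratio tends to $0$, giving $\P^n_{H,\phi}(\mathcal{A}) \to 0$. I expect the main obstacle to be the clean control of the martingale contribution to the entropy — specifically justifying that $\E^n_{\rho_*}[\mathcal{M}^n_T(H)\log\mathcal{M}^n_T(H)] = \mathcal{O}(a_n^2/n^d)$ rigorously, which requires care with the error terms $R^n_s(H)$ and with the tail behavior of $\log \mathcal{M}^n_T(H)$; this is, however, essentially a rerun of the exponential moment estimates already established in Subsection \ref{subsec exp tight}, so it should be routine given the earlier results.
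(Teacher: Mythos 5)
Your proposal is correct and follows essentially the same route as the paper, which does not spell out a proof but points to the entropy inequality and to \cite[Lemma 4.3]{zhao2024moderate}, whose argument is precisely the one you describe: bound $\mathrm{Ent}(\P^n_{H,\phi}\,|\,\P^n_{\rho_*})$ by $C(H,\phi)\,a_n^2/n^d$ (splitting into the initial-measure part and the martingale part, the latter controlled via the exponential moment bounds of Subsection \ref{subsec exp tight}) and combine with the elementary entropy inequality. One small slip: since $d\P^n_{H,\phi}/d\P^n_{\rho_*} = \mathcal{M}^n_T(H)\,d\nu^n_{\phi,\rho_*}/d\nu^n_{\rho_*}$, the identity should read $\E^n_{H,\phi}[\log \mathcal{M}^n_T(H)] = \E^n_{\rho_*}\big[\mathcal{M}^n_T(H)\,\tfrac{d\nu^n_{\phi,\rho_*}}{d\nu^n_{\rho_*}}\log \mathcal{M}^n_T(H)\big]$ rather than $\E^n_{\rho_*}[\mathcal{M}^n_T(H)\log \mathcal{M}^n_T(H)]$; the extra density factor is harmless (it has all moments bounded by $e^{Ca_n^2/n^d}$), and your alternative route via $h \leq \tfrac{1}{p-1}\log \E^n_{\rho_*}\big[(d\P^n_{H,\phi}/d\P^n_{\rho_*})^{p}\big]$ with $p>1$ avoids the issue entirely.
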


For the martingale term on the right-hand side of \eqref{eqn 8}, 
\begin{align*}
	\lim_{n \rightarrow \infty} \frac{n^d}{a_n^2} \log  \mathcal{M}^n_T (H)^{-1} = - \lim_{n \rightarrow \infty} \Big\{  \ell_T (\mu^n,H) - [H,H] \Big\} = - \Big\{  \ell_T (\mu,H) - [H,H] \Big\} = - \mathcal{Q}_{\rm dyn} (\mu)  
\end{align*}
in $\P^n_{H,\phi}$-probability.  In the above first identity we used \eqref{exp martingale}, \eqref{error} and Lemma \ref{lem 1}; in the second one we used Proposition \ref{pro: hydrodynamic limits}; in the last one we used Lemma \ref{lem properties rate function}. By Proposition \ref{pro: hydrodynamic limits} and dominated convergence theorem,
\[\lim_{n \rightarrow \infty} \frac{n^d}{a_n^2}   \E^n_{H,\phi} \Big[ \log \mathcal{M}^n_T (H)^{-1}  \Big| \mu^n_\cdot \in O \Big] =  - \mathcal{Q}_{\rm dyn} (\mu) .\]

For the last term on the right-hand side of \eqref{eqn 8},  since $\nu^n_{\rho_*}$ and $\nu^n_{\phi,\rho_*}$ are both product measures, direct calculations yield that 
\[\lim_{n \rightarrow \infty} \frac{n^d}{a_n^2}   \E^n_{H,\phi} \Big[ \log  \frac{d \nu^n_{\rho_*}}{d \nu^n_{\phi,\rho_*}}  \Big| \mu^n_\cdot \in O \Big] = - \frac{1}{2 \chi(\rho_*)} \|\phi\|_{L^2 (\T^d)}^2 = - \mathcal{Q}_0 (\mu_0),\]
see \cite{zhao2024moderate} for details. 

Finally, we conclude the proof of the lower bound by \eqref{eqn 8} and the above estimates.

\subsection{Proof of Proposition \ref{pro: hydrodynamic limits}}\label{sec: pf hydrodynamics} 

Let $\pi^n_t$ be the distribution of the process at time $t$, and introduce the reference measure $\nu^n_t$ as the product measure on $\Omega_n$ with marginals
\[\nu^n_t (\eta_x = 1) = \rho^n_x, \quad x \in \T^d_n,\]
where we shorten \[\rho^n_x := \rho^n (t,\tfrac{x}{n}) := \rho_* + \frac{a_n}{n^d} \rho (t,\tfrac{x}{n}).\]
Recall the relative entropy of $\pi^n_t$ with respect to $\nu^n_t$ is defined as 
\[\mathcal{H}_n (t) := \mathcal{H} (\pi^n_t | \nu^n_t) = \int f^n_t \log f^n_t d \nu^n_t, \quad f^n_t = d \pi^n_t / d \nu^n_t.\]
The main aim of this section is to prove the following result.

\begin{proposition}\label{pro: relative entropy}
For any $0 \leq t \leq T$, $\mathcal{H}_n (t) \ll a_n^2 / n^d$.
\end{proposition}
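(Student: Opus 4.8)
The plan is to prove Proposition \ref{pro: relative entropy} by the relative entropy method of Jara--Menezes, following the scheme in \cite{jaram20nonequilireaction} and the closely related moderate-deviation adaptations. The strategy is to derive a differential inequality of Gronwall type for $\mathcal{H}_n(t)$ of the form
\[
\mathcal{H}_n'(t) \leq C\,\mathcal{H}_n(t) + o(a_n^2/n^d),
\]
and then, since $\mathcal{H}_n(0) = 0$ (the initial law $\nu^n_{\phi,\rho_*}$ coincides, by construction, with the reference measure $\nu^n_0$), conclude by Gronwall that $\mathcal{H}_n(t) \ll a_n^2/n^d$ uniformly on $[0,T]$.

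First I would recall the Yau-type entropy inequality: writing $f^n_t = d\pi^n_t/d\nu^n_t$, one has
\[
\partial_t \mathcal{H}_n(t) \leq \int \Big( \tfrac{1}{f^n_t}\big(L_{n,t}^H\big)^{*,\nu^n_t}\mathbf{1} - \partial_t \log \nu^n_t \Big) f^n_t \, d\nu^n_t - \text{(a Dirichlet form term)},
\]
where $(L_{n,t}^H)^{*,\nu^n_t}$ is the adjoint of the perturbed generator in $L^2(\nu^n_t)$ and the Dirichlet form term will absorb the bad pieces via the main lemma. The core computation is to expand $\tfrac{1}{f}(L_{n,t}^H)^{*}\mathbf{1} - \partial_t \log \nu^n_t$ using that $\nu^n_t$ is a slowly varying product measure with density $\rho^n_x = \rho_* + \tfrac{a_n}{n^d}\rho(t,x/n)$, and to check that the leading-order terms cancel precisely because $\rho(t,u)$ solves the hydrodynamic PDE \eqref{hydrodynamic equation}; this is exactly where the linearized reaction term $F'(\rho_*)$, the diffusion $\Delta$, and the two correction terms $-2\chi(\rho_*)\Delta H$ and $G(\rho_*)H$ enter. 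After this cancellation, what remains are: (i) a one-block/two-block type error coming from replacing local functions by their conditional expectations, (ii) quadratic fluctuation terms of the form $\sum_{x\sim y}\bar\eta_x\bar\eta_y$ (with $\bar\eta$ now centered at the moving profile $\rho^n_x$ rather than at $\rho_*$), and (iii) genuinely lower-order terms of size $o(a_n^2/n^d)$ coming from the Taylor expansion of the exponential weights, since $a_n/n^d \to 0$.

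The key step — and the main obstacle — is controlling the quadratic term (ii). Here I would invoke the second version of the main lemma, Proposition \ref{lem main lemma 2} (the one referenced in Subsection~\ref{subsec perturbed dynamics} as needed for the relative entropy method, stated relative to the inhomogeneous reference measure $\nu^n_t$): it bounds $\int V(h) f\, d\nu^n_t$ by a small multiple of the exclusion Dirichlet form $\int \Gamma_n^{\rm ex}(\sqrt f)\,d\nu^n_t$, a controlled multiple of the reaction Dirichlet form $\int \Gamma_n^{\rm r}(\sqrt f)\,d\nu^n_t$, and an error $C\mathcal{A}(\|h\|_\infty)n^{d-2}g_d(n)$. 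Choosing $h^i(x) = \tfrac{c}{a_n}H_t(x/n)$ with $c$ a constant (so that $\|h\|_\infty \asymp 1/a_n \to 0$, hence $\mathcal{A}(\|h\|_\infty)\asymp 1/a_n$), the smallness condition $C_0\kappa(\rho_*)\mathcal{A}(\cdot) < 1$ is automatic for $\lambda \in [-\lambda_c,\lambda_c]$ and the error term becomes $O(n^{d-2}g_d(n)/a_n) = o(a_n^2/n^d)$ precisely under Assumption~\ref{assump: a n}, namely $n^{d-1}\sqrt{g_d(n)} \ll a_n$. The remaining Dirichlet form contributions, being negative, are absorbed by the Dirichlet form term already present in the entropy inequality. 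One must also verify that the adjoint $(L_{n,t}^H)^{*}\mathbf{1}$ relative to the moving measure produces, besides the PDE-cancelling terms, only such quadratic pieces plus errors — this uses Lemma~\ref{lem 2} together with the $L^*_n\mathbf{1}$ computation already carried out in Section~\ref{sec: upper bound}, adapted to $\nu^n_t$ in place of $\nu^n_{\rho_*}$.

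Finally, I would assemble: after the PDE cancellation and the main-lemma bound, the entropy inequality reads $\mathcal{H}_n'(t) \leq C\mathcal{H}_n(t) + R_n(t)$ with $\sup_{t\le T}|R_n(t)| = o(a_n^2/n^d)$, where the constant $C$ depends on $\|H\|_{C^{1,\infty}}$, $\|\rho\|_{C^{1,\infty}}$ and the parameters. Since $\mathcal{H}_n(0)=0$, Gronwall's lemma gives $\mathcal{H}_n(t) \le e^{CT}\int_0^t R_n(s)\,ds = o(a_n^2/n^d)$ uniformly on $[0,T]$, which is the claim. Proposition~\ref{pro: hydrodynamic limits} then follows by the standard argument: the entropy bound plus the entropy inequality upgrades convergence to a law of large numbers for $\mu^n_t$ toward $\rho(t,u)\,du$.
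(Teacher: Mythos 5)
Your overall scheme is the same as the paper's: Yau's entropy inequality for the perturbed, time-inhomogeneous dynamics relative to the slowly varying product measure $\nu^n_t$, cancellation of the leading terms because $\rho$ solves \eqref{hydrodynamic equation}, control of the quadratic fluctuation terms by a main lemma, and Gr\"onwall from $\mathcal{H}_n(0)=0$. However, there is a genuine gap in the step you yourself identify as the main obstacle. The dominant quadratic term in $L_{n,t}^{H,*}\mathbf{1}$ comes from the Glauber part $B_{n,1}(t)$ and reads $\frac{\lambda}{2d}\sum_{x}\sum_{y\sim x}(1-\rho^n_x)\chi(\rho^n_y)\,w_x w_y$, i.e.\ it is of the form $V(h)$ with $\|h\|_\infty$ of order $\lambda$, a constant --- there is no $1/a_n$ prefactor. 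Your choice $h^i(x)=\frac{c}{a_n}H_t(x/n)$, with the ensuing claims that $\mathcal{A}(\|h\|_\infty)\asymp 1/a_n$, that the smallness condition is ``automatic'', and that the error is $O(n^{d-2}g_d(n)/a_n)$, addresses the term $\frac{\lambda}{2da_n}\sum_{x\sim y}\bar\eta_x\bar\eta_y H$ appearing in the exponential martingale \eqref{eqn 6} of Section~\ref{sec: upper bound}, not the term in the entropy production, which is $a_n$ times larger. This is precisely the point the paper flags: the degree-two super-exponential estimate (Lemma~\ref{lem: super-esponential estimate 2}) controls $\sum\bar\eta_x\bar\eta_y$ only at scale $o(a_n)$, whereas here one needs scale $o(a_n^2/n^d)\ll a_n$, so neither that estimate nor a main-lemma bound calibrated to $\|h\|_\infty\asymp 1/a_n$ suffices.

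Relatedly, you misstate Proposition~\ref{lem main lemma 2}: it is not the $\nu^n_t$-version of Proposition~\ref{prop: main lemma} with an $\mathcal{A}(\|h\|_\infty)\int\Gamma^{\rm r}$ term, but the Jara--Menezes bound $\frac14\int\Gamma_{n,t}^{H,\rm ex}(\sqrt{f^n_t})\,d\nu^n_t+C(\mathcal{H}_n(t)+n^{d-2}g_d(n))$, in which the relative entropy itself appears on the right-hand side; this is exactly what produces the $C\mathcal{H}_n(t)$ term in your Gr\"onwall inequality, which in your write-up is asserted but never actually generated by any estimate. Two smaller points: the remaining error terms ($w_xw_{x+e_i}$ with small prefactors, and $\frac{1}{a_n}\sum_x w_x(\cdots)$ terms including the discretization residual of the PDE) are handled in the paper not by one-block/two-block arguments but by recycling the super-exponential estimates of Lemmas~\ref{lem: super-esponential estimate 1} and \ref{lem: super-esponential estimate 2} through Lemma~\ref{lem 1}, which transfers super-exponential smallness under $\P^n_{\rho_*}$ into smallness in $\P^n_{H,\phi}$-probability; and the final bound on the error $n^{d-2}g_d(n)$ requires $n^{2d-2}g_d(n)\ll a_n^2$, which is Assumption~\ref{assump: a n} applied with $h$ of order one, not order $1/a_n$.
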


We first prove the hydrodynamic limits from the above result.

\begin{proof}[Proof of Proposition \ref{pro: hydrodynamic limits} from Proposition \ref{pro: relative entropy}]
By the entropy inequality, it is directly from the above proposition that, for any $0 \leq t \leq T$, for any $H \in \mathcal{D} (\T^d)$,
\[\lim_{n \rightarrow \infty} \< \mu^n_t, H \> = \int_{\T^d} \rho (t,u) H(u) du \]
in $\P^n_{H,\phi}$-probability, see \cite[Proof of Theorem 2.2]{xu2023equilibrium} for example.  Thus, to show the sample-path convergence, we only need to show $\{\mu^n_t, 0 \leq t \leq T\}$ is tight in the space $D ([0,T], \mathcal{D}^\prime(\mathbb{T}^d))$. It suffices to show that, for any $H \in \mathcal{D} (\T^d)$  and for any $\varepsilon > 0$,
\begin{align*}
	\lim_{M \rightarrow \infty} \limsup_{n \rightarrow \infty} \P^n_{H,\phi} \Big( \sup_{0 \leq t \leq T} \big| \<\mu^n_t,H\>  \big| > M\Big) = 0,\\
	\lim_{\delta \rightarrow 0} \limsup_{n \rightarrow \infty} \P^n_{H,\phi} \Big( \sup_{|t-s| \leq \delta} \big| \<\mu^n_t - \mu^n_s,H\> \big| > \varepsilon\Big) = 0.
\end{align*}
The above two estimates follows immediately from \eqref{exp tight c1}, \eqref{exp tight c2} and Lemma \ref{lem 1}.
\end{proof}

Now, we prove Proposition \ref{pro: relative entropy}. By Yau's relative entropy inequality \cite{jara2018non},
\[\mathcal{H}_n^\prime (t) \leq \int \Big(L_{n,t}^{H,*} \mathbf{1} - \partial_t \log \nu^n_t\Big) f^n_t d \nu^n_t- \int \Gamma_{n,t}^H (\sqrt{f^n_t}) d \nu^n_t,\]
where $L_{n,t}^{H,*}$ is the adjoint of $L_{n,t}^{H}$ in $L^2 (\nu^n_t)$, and $\Gamma_{n,t}^H (f) = \Gamma_{n,t}^{H,{\rm ex}} (f) + \Gamma_{n,t}^{H,{\rm r}} (f)$, where
\begin{align*}
\Gamma_{n,t}^{H,{\rm ex}} (f)  &= \frac{n^2}{2} \sum_{x \in \T_n^d} \sum_{i=1}^d  \exp \big\{   \tfrac{a_n}{n^d} (\eta_x - \eta_{x+e_i}) \big( H_t(\tfrac{x+e_i}{n}) - H_t (\tfrac{x}{n})\big)  \big\}  \big[ f(\eta^{x,x+e_i}) - f(\eta) \big]^2, \\
\Gamma_{n,t}^{H,{\rm r}} (f) &= \frac{1}{2} \sum_{x \in \T_n^d} c_x (\eta) \exp \big\{   \tfrac{a_n}{n^d} (1-2\eta_x) H_t (\tfrac{x}{n})  \big\} \big[ f(\eta^{x}) - f(\eta) \big]^2.
\end{align*}
By Lemma \ref{lem 2}, $L_{n,t}^{H,*} \mathbf{1} = A_n (t) + B_n (t)$,  where 	$A_n (t)$
corresponds to the dynamics of the exclusion process and equals
\begin{align*}
 &n^2 \sum_{x \in \T_n^d} \sum_{i=1}^d \Big\{ \eta_x (1-\eta_{x+e_i}) \Big[  \exp \Big\{ - \frac{a_n}{n^{d+1}} \nabla_{n,i} H_t (\tfrac{x}{n}) \Big\} \frac{\rho^n_{x+e_i} (1-\rho^n_x)}{\rho^n_x (1-\rho^n_{x+e_i})} - \exp \Big\{ \frac{a_n}{n^{d+1}} \nabla_{n,i} H_t (\tfrac{x}{n}) \Big\}\Big] \\
	&+ \eta_{x+e_i} (1-\eta_{x}) \Big[  \exp \Big\{  \frac{a_n}{n^{d+1}} \nabla_{n,i} H_t (\tfrac{x}{n}) \Big\} \frac{\rho^n_{x} (1-\rho^n_{x+e_i})}{\rho^n_{x+e_i} (1-\rho^n_{x})} - \exp \Big\{- \frac{a_n}{n^{d+1}} \nabla_{n,i} H_t (\tfrac{x}{n}) \Big\}\Big] \Big\},
\end{align*}
and the Glauber part $B_n (t)$ equals 
\begin{align*}
& \sum_{x \in \T_n^d} \Big\{    \eta_x \Big[ \Big(a+\frac{\lambda}{2d} \sum_{y \sim x} \eta_y \Big) \exp \Big\{  \frac{a_n}{n^d} H_t (\tfrac{x}{n}) \Big\}  \frac{1-\rho^n_x}{\rho^n_x}- b \exp \Big\{ - \frac{a_n}{n^d} H_t (\tfrac{x}{n})\Big\}\Big]  \\
	&+ (1-\eta_x) \Big[ b \exp \Big\{ - \frac{a_n}{n^d} H_t (\tfrac{x}{n})\Big\}  \frac{\rho^n_x}{1-\rho^n_x} - \Big(a+\frac{\lambda}{2d} \sum_{y \sim x} \eta_y \Big)   \exp \Big\{  \frac{a_n}{n^d} H_t (\tfrac{x}{n}) \Big\}  \Big]\Big\}.
\end{align*}
The time derivative equals
\begin{align*}
	\partial_t \log \nu^n_t = \partial_t \sum_{x \in \T_n^d} \Big[(1-\eta_x) \log (1 - \rho^n_x) + \eta_x \log \rho^n_x\Big] =  \frac{a_n}{n^d}\sum_{x \in \T_n^d} w_x \partial_t \rho(t,\tfrac{x}{n}),
\end{align*}
where
\[w_x := w^n(t,x; \eta) := \frac{\eta_x - \rho^n_x}{\rho^n_x (1-\rho^n_x)}.\]

Next, we deal with the term $A_n (t)$. By Taylor's expansion, we write $A_n (t) = \sum_{i=1}^3 A_{n,i} (t) + \mathcal{O}_H (a_n^3 / n^{2d+1})$, where
\begin{align*}
	A_{n,1} (t) &= n^2 \sum_{x \in \T_n^d} \sum_{i=1}^d \Big\{ \eta_x (1-\eta_{x+e_i}) \Big[  \frac{\rho^n_{x+e_i} (1-\rho^n_x)}{\rho^n_x (1-\rho^n_{x+e_i})} - 1\Big] 
	+ \eta_{x+e_i} (1-\eta_{x}) \Big[   \frac{\rho^n_{x} (1-\rho^n_{x+e_i})}{\rho^n_{x+e_i} (1-\rho^n_{x})} -1\Big] \Big\},\\
	A_{n,2} (t) &=  \frac{a_n n^2}{n^{d+1}} \sum_{x \in \T_n^d} \sum_{i=1}^d \nabla_{n,i} H_t (\tfrac{x}{n}) \Big\{ \eta_x (1-\eta_{x+e_i}) \Big[ - \frac{\rho^n_{x+e_i} (1-\rho^n_x)}{\rho^n_x (1-\rho^n_{x+e_i})} - 1\Big] \\
	&\qquad \qquad + \eta_{x+e_i} (1-\eta_{x}) \Big[   \frac{\rho^n_{x} (1-\rho^n_{x+e_i})}{\rho^n_{x+e_i} (1-\rho^n_{x})} +1\Big] \Big\},\\
	A_{n,3} (t) &=   \frac{a_n^2 n^2}{2n^{2d+2}} \sum_{x \in \T_n^d} \sum_{i=1}^d \Big( \nabla_{n,i} H_t (\tfrac{x}{n})\Big)^2 \Big\{ \eta_x (1-\eta_{x+e_i}) \Big[  \frac{\rho^n_{x+e_i} (1-\rho^n_x)}{\rho^n_x (1-\rho^n_{x+e_i})} - 1\Big] \\
	&\qquad \qquad + \eta_{x+e_i} (1-\eta_{x}) \Big[   \frac{\rho^n_{x} (1-\rho^n_{x+e_i})}{\rho^n_{x+e_i} (1-\rho^n_{x})} -1\Big] \Big\}.
\end{align*}  
Note that $\mathcal{O}_H (a_n^3 / n^{2d+1}) \ll a_n^2/n^d$. For $A_{n,1} (t)$, we first write the term inside the brace as
\begin{align*}
(\rho^n_{x+e_i} - \rho^n_x) \Big[\frac{\eta_x (1-\eta_{x+e_i}) }{\rho^n_x (1-\rho^n_{x+e_i})} - \frac{\eta_{x+e_i} (1-\eta_{x})}{\rho^n_{x+e_i} (1-\rho^n_{x})}\Big] = (\rho^n_{x+e_i} - \rho^n_x) \big[w_x - w_{x+e_i} - (\rho^n_{x+e_i} - \rho^n_x) w_x w_{x+e_i}\big],
\end{align*}
then, using the summation by parts formula,
\begin{align*}
A_{n,1} (t) = \frac{a_n}{n^d} \sum_{x \in \T_n^d} w_x \Delta_n \rho (t,\tfrac{x}{n}) - \frac{a_n^2}{n^{2d}} \sum_{x \in \T_n^d} \sum_{i=1}^d\Big( \nabla_{n,i} \rho (t,\tfrac{x}{n})\Big)^2 w_x w_{x+e_i}.
\end{align*}
By  Lemmas \ref{lem: super-esponential estimate 2} and \ref{lem 1}, it is easy to see that
\begin{equation}\label{error small}
\int_0^t E_{\pi^n_s} \Big[\frac{a_n^2}{n^{2d}} \sum_{x \in \T_n^d} \sum_{i=1}^d\Big( \nabla_{n,i} \rho (s,\tfrac{x}{n})\Big)^2 w_x w_{x+e_i}\Big] ds \ll \frac{a_n^2}{n^d}.
\end{equation}
Similarly, since $|\rho^n_{x+e_i} - \rho^n_x| \leq Ca_n/n^{d+1}$,
\[|A_{n,3} (t)|  \leq \frac{C(H) a_n^2}{n^{2d}} \times n^d \times \frac{a_n}{n^{d+1}} = \frac{C(H) a_n^3}{n^{2d+1}} \ll \frac{a_n^2}{n^d}.\]
For $A_{n,2} (t)$, we write the term inside the brace as 
 \begin{align*}
 &\Big[ \rho^n_{x+e_i} (1-\rho^n_x) + \rho^n_{x} (1-\rho^n_{x+e_i})  \Big] \Big[ 	 \frac{\eta_{x+e_i} (1-\eta_{x})}{\rho^n_{x+e_i} (1-\rho^n_{x})}  -  \frac{\eta_x (1-\eta_{x+e_i})}{\rho^n_x (1-\rho^n_{x+e_i})}  \Big] \\
 =&  \Big[ \rho^n_{x+e_i} (1-\rho^n_x) + \rho^n_{x} (1-\rho^n_{x+e_i})  \Big]  \Big[ w_{x+e_i} - w_x + (\rho^n_{x+e_i} - \rho^n_x) w_x w_{x+e_i}\Big].
 \end{align*}
 Thus, 
 \begin{align*}
 		A_{n,2} (t) &=  \frac{a_n }{n^{d-1}} \sum_{x \in \T_n^d} \sum_{i=1}^d \nabla_{n,i} H_t (\tfrac{x}{n}) \Big[ \rho^n_{x+e_i} (1-\rho^n_x) + \rho^n_{x} (1-\rho^n_{x+e_i})  \Big]  (w_{x+e_i} - w_x)\\
 		&+ \frac{a_n^2}{n^{2d}}  \sum_{x \in \T_n^d} \sum_{i=1}^d \nabla_{n,i} H_t (\tfrac{x}{n}) \Big[ \rho^n_{x+e_i} (1-\rho^n_x) + \rho^n_{x} (1-\rho^n_{x+e_i})  \Big]  \nabla_{n,i} \rho(t,\tfrac{x}{n}) w_x w_{x+e_i}.
 \end{align*}
By  Lemmas \ref{lem: super-esponential estimate 2} and \ref{lem 1}, the last line satisfies the estimate in \eqref{error small}. For the first line in the expression of $A_{n,2} (t)$,  first replacing  $\rho^n_{x+e_i} (1-\rho^n_x) + \rho^n_{x} (1-\rho^n_{x+e_i})$ with $2 \chi(\rho_*)$ plus an error term of order $a_n/n^d$, then using the summation by parts formula, and finally by Lemmas \ref{lem: super-esponential estimate 1} and \eqref{lem 1},
\[A_{n,2} (t) =  -2 \chi (\rho_*) \frac{a_n }{n^{d}} \sum_{x \in \T_n^d} w_x \Delta_n H_t (\tfrac{x}{n}) \]
plus an error term satisfying the estimate in \eqref{error small}. To sum up, we have shown that
\[A_n (t) =  \frac{a_n}{n^d} \sum_{x \in \T_n^d} w_x \Big( \Delta_n \rho (t,\tfrac{x}{n}) - 2 \chi (\rho_*) \Delta_n H_t (\tfrac{x}{n}) \Big)\]
plus an error term satisfying the estimate in \eqref{error small}. 

Now, we deal with the term $B_n (t)$. By Taylor's expansion, $B_n (t) = \sum_{i=1}^3 B_{n,i} (t) + \mathcal{O}_H (a_n^3/n^{2d})$, where
\begin{align*}
B_{n,1} (t) &= 	 \sum_{x \in \T_n^d} \Big\{    \eta_x \Big[ \Big(a+\frac{\lambda}{2d} \sum_{y \sim x} \eta_y \Big) \frac{1-\rho^n_x}{\rho^n_x}- b \Big]  + (1-\eta_x) \Big[ b   \frac{\rho^n_x}{1-\rho^n_x} - \Big(a+\frac{\lambda}{2d} \sum_{y \sim x} \eta_y \Big)   \Big]\Big\},\\
B_{n,2} (t) &= 	\frac{a_n}{n^d} \sum_{x \in \T_n^d} H_t (\tfrac{x}{n})\Big\{    \eta_x \Big[ \Big(a+\frac{\lambda}{2d} \sum_{y \sim x} \eta_y \Big) \frac{1-\rho^n_x}{\rho^n_x}+ b \Big]  - (1-\eta_x) \Big[ b   \frac{\rho^n_x}{1-\rho^n_x} + \Big(a+\frac{\lambda}{2d} \sum_{y \sim x} \eta_y \Big)   \Big]\Big\},\\
B_{n,3} (t) &= 	\frac{a_n^2}{2n^{2d}} \sum_{x \in \T_n^d} H_t^2 (\tfrac{x}{n})\Big\{    \eta_x \Big[ \Big(a+\frac{\lambda}{2d} \sum_{y \sim x} \eta_y \Big) \frac{1-\rho^n_x}{\rho^n_x}- b \Big]  + (1-\eta_x) \Big[ b   \frac{\rho^n_x}{1-\rho^n_x} - \Big(a+\frac{\lambda}{2d} \sum_{y \sim x} \eta_y \Big)   \Big]\Big\}.
\end{align*}
For $B_{n,1} (t)$, we write the term inside the brace as
\begin{align*}
	&\Big(\frac{\eta_x}{\rho^n_x} - \frac{1-\eta_x}{1-\rho^n_x}\Big) \Big[ \Big(a+\frac{\lambda}{2d} \sum_{y \sim x} \eta_y \Big)  (1-\rho^n_x) - b \rho^n_x\Big] \\
	&= w_x \Big[F (\rho^n_x) + \frac{\lambda (1-\rho^n_x)}{2d} \sum_{y \sim x} \big\{ (\eta_y - \rho^n_y) +( \rho^n_y - \rho^n_x) \big\} \Big].
\end{align*}
Since $F(\rho^n_x) = F^\prime (\rho_*) a_n \rho (t,\tfrac{x}{n}) / n^d + \mathcal{O} (a_n^2 / n^{2d})$, by Lemmas \ref{lem: super-esponential estimate 1} and \ref{lem 1}, 
\[ \sum_{x \in \T_n^d} w_x F(\rho^n_x) = \frac{a_n}{n^d} \sum_{x \in \T_n^d} F^\prime (\rho_*) w_x \rho (t,\tfrac{x}{n})\]
plus an error term satisfying the estimate in \eqref{error small}.   Similarly, the term 
\[ \frac{\lambda}{2d} \sum_{x \in \T_n^d}  (1-\rho^n_x) w_x \sum_{y \sim x}  ( \rho^n_y - \rho^n_x) = \frac{ \lambda  a_n}{2d n^{d+2}}  \sum_{x \in \T_n^d}  (1-\rho^n_x) w_x \Delta_n \rho (t,\tfrac{x}{n}) \]
also satisfies the estimate in \eqref{error small}.  We are left with the term 
\[ \frac{\lambda }{2d} \sum_{x \in \T_n^d} \sum_{y \sim x}  (1-\rho^n_x) w_x (\eta_y - \rho^n_y) =  \frac{\lambda }{2d} \sum_{x \in \T_n^d} \sum_{y \sim x}  (1-\rho^n_x) \chi (\rho^n_y) w_x w_y.\]
Note that we cannot use Lemma \ref{lem: super-esponential estimate 2} to deal with the above term since $a_n^2 / n^d \ll a_n$.  Instead, we need the following version of the main lemma, see \cite[Lemma 3.1]{jara2018non}.

\begin{proposition}[The second main lemma]\label{lem main lemma 2}
There exists some constant $C=C(\lambda,d,\rho_*)$ such that 
\[\int \frac{\lambda }{2d} \sum_{x \in \T_n^d} \sum_{y \sim x}  (1-\rho^n_x) \chi (\rho^n_y) w_x w_y f^n_t d \nu^n_t \leq \frac{1}{4} \int \Gamma_{n,t}^{H, {\rm ex}} (\sqrt{f^n_t}) d \nu^n_t + C \big(\mathcal{H}_n (t) + n^{d-2} g_d (n)\big).\]
\end{proposition}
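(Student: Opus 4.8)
The plan is to adapt the multiscale ``main lemma'' of Jara and Menezes \cite[Lemma 3.1]{jara2018non} (see also \cite{gonccalves2024clt,jaram20nonequilireaction}) to the slowly varying reference profile $\rho^n_\cdot$ and to the tilted exclusion Dirichlet form $\Gamma^{H,{\rm ex}}_{n,t}$. First I would rewrite the left-hand side in profile-centred variables. Setting $\psi_z := \eta_z - \rho^n_z$ and using $\chi(\rho^n_y)w_y = \psi_y$ together with $(1-\rho^n_x)w_x = \psi_x/\rho^n_x$, the functional on the left-hand side equals
\[
\frac{\lambda}{2d}\int \sum_{x \in \T_n^d}\sum_{y \sim x} \frac{\psi_x \psi_y}{\rho^n_x}\, f^n_t \, d\nu^n_t ,
\]
a quadratic form in the $\psi$'s with coefficients $h(x) := \lambda/(2d\rho^n_x)$ bounded uniformly in $n$, $x$ and $t\in[0,T]$, because $\rho^n_x \to \rho_*\in(0,1)$ uniformly. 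Moreover $\exp\{\tfrac{a_n}{n^d}(\eta_x - \eta_{x+e_i})(H_t(\tfrac{x+e_i}{n}) - H_t(\tfrac{x}{n}))\} = 1 + \mathcal O_H(a_n/n^{d+1})$, so $\Gamma^{H,{\rm ex}}_{n,t} \geq \tfrac12 \Gamma^{\rm ex}_n$ for $n$ large; it therefore suffices to prove the bound with $\tfrac18 \int \Gamma^{\rm ex}_n(\sqrt{f^n_t})\,d\nu^n_t$ in place of $\tfrac14 \int \Gamma^{H,{\rm ex}}_{n,t}(\sqrt{f^n_t})\,d\nu^n_t$.

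The core of the argument is a scale-by-scale replacement of the nearest-neighbour products. For a box of side $\ell$ set $\overrightarrow{\psi}^{\,\ell}_x := \ell^{-d}\sum_{|z-x|\le \ell}\psi_z$. Using a moving-particle estimate for the exclusion dynamics (controlling the $L^2(\nu^n_t)$-cost of replacing an occupation variable by a local average in terms of $\Gamma^{\rm ex}_n$), combined with Young's inequality, one successively replaces $\psi_x\psi_y$ ($y\sim x$) by products of box averages $\overrightarrow{\psi}^{\,\ell}_x\overrightarrow{\psi}^{\,\ell}_x$ over increasing dyadic scales $\ell = 2^k$ up to $\ell\simeq n$, arriving at the ``diagonal'' term $\sum_x h(x)(\overrightarrow{\psi}^{\,n}_x)^2$. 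At each step the replacement cost is bounded by $\beta \int \Gamma^{\rm ex}_n(\sqrt{f^n_t})\,d\nu^n_t$ plus a deterministic term; summing these over the dyadic scales — where the divergence in low dimensions, of order $n$ for $d=1$, $\log n$ for $d=2$ and $O(1)$ for $d\ge 3$, accounts for the factor $g_d(n)$ — produces the term $C\, n^{d-2} g_d(n)$ exactly as in Proposition \ref{prop: main lemma}, and choosing $\beta$ small enough absorbs the accumulated exclusion cost into $\tfrac18 \int \Gamma^{\rm ex}_n$. The remaining near-global average $\overrightarrow{\psi}^{\,n}_x \approx n^{-d}\sum_z \psi_z$ is handled by the entropy inequality $\int \Phi f^n_t\,d\nu^n_t \le \gamma^{-1}\mathcal H_n(t) + \gamma^{-1}\log E_{\nu^n_t}[e^{\gamma\Phi}]$ applied to $\Phi = \sum_x h(x)(\overrightarrow{\psi}^{\,n}_x)^2 \simeq n^{-d}(\sum_z \psi_z)^2$: since $\nu^n_t$ is a product measure with densities uniformly bounded away from $0$ and $1$, $n^{-d/2}\sum_z\psi_z$ has uniformly bounded variance and sub-Gaussian tails, so $\log E_{\nu^n_t}[e^{\gamma\Phi}] = O(1)$ for $\gamma$ a small enough constant $\gamma=\gamma(\lambda,d,\rho_*)$, which yields the contribution $C\,\mathcal H_n(t)$.

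Because the densities $\rho^n_x$ lie in a fixed compact subinterval of $(0,1)$ and the profile is smooth, the inhomogeneity only affects the bounded coefficients $h(x)$ and the constants appearing in the moving-particle estimate, in the variance bound and in the sub-Gaussian bound, so no new error terms arise beyond those already accounted for; all estimates are uniform in $t\in[0,T]$ since $H$ and $\rho$ belong to $C^{1,\infty}([0,T]\times\T^d)$, which is what is needed for the subsequent time-integration of Yau's inequality in the proof of Proposition \ref{pro: relative entropy}. I expect the main obstacle to be the bookkeeping of the multiscale replacement: carrying the exclusion Dirichlet form through the moving-particle lemma with the correct powers of $\ell$ and of $n$ at every dyadic scale, so that the accumulated deterministic cost comes out as precisely $n^{d-2}g_d(n)$ rather than something larger. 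A secondary point requiring care is that it is the tilted Dirichlet form $\Gamma^{H,{\rm ex}}_{n,t}$, not the homogeneous one, that is available on the right-hand side, which is why the comparison $\Gamma^{H,{\rm ex}}_{n,t}\ge\tfrac12\Gamma^{\rm ex}_n$ is recorded at the outset.
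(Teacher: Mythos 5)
The paper does not actually prove this proposition — it is quoted directly from Jara and Menezes (\cite[Lemma 3.1]{jara2018non}) — and your sketch is a faithful outline of exactly that multiscale argument (profile-centred variables, dyadic block replacements controlled by the exclusion Dirichlet form yielding the $n^{d-2}g_d(n)$ cost, and the entropy inequality plus a sub-Gaussian bound for the near-global average yielding the $\mathcal{H}_n(t)$ term), with the correct observations that the tilted Dirichlet form is comparable to the homogeneous one and that the slowly varying profile only perturbs bounded coefficients. So your proposal takes essentially the same route as the proof the paper relies on.
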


Since $n^{d-1} \sqrt{g_d (n)} \ll a_n$, we have
\begin{align*}
\frac{n^d}{a_n^2} \int_0^t E_{\pi^n_s} \big[B_{n,1} (s) \big] ds &\leq \int_0^t E_{\pi^n_s} \Big[ \frac{1}{a_n} \sum_{x \in \T_n^d} F^\prime (\rho_*) w_x \rho (s,\tfrac{x}{n})\Big] ds \\
&+ \frac{1}{4} \int_0^t ds \Big\{ \int \Gamma_{n,s}^{H, {\rm ex}} (\sqrt{f^n_s}) d \nu^n_s \Big\} + \frac{Cn^d}{a_n^2} \int_0^t \mathcal{H}_n (s) ds + o_n (1). 
\end{align*}

Comparing the expression of $B_{n,3} (t)$ with $B_{n,1} (t)$, it is immediately that $B_{n,3} (t)$ satisfies the estimate in \eqref{error small}. It remains to deal with $B_{n,2} (t)$.  We first write the term inside the brace as
\begin{align*}
w_x \Big[\Big(a+\frac{\lambda}{2d} \sum_{y \sim x} \eta_y \Big) (1-\rho_x^n) + b \rho^n_x\Big] = w_x G(\rho^n_x) + \frac{\lambda}{2d} \sum_{y \sim x} w_x (1-\rho^n_x) [\eta_y - \rho^n_x].
\end{align*}
By Taylor's expansion, Lemmas \ref{lem: super-esponential estimate 1} and \ref{lem 1}, 
\begin{align*}
	\frac{a_n}{n^d} \sum_{x \in \T_n^d} H_t (\tfrac{x}{n})  w_x G(\rho^n_x) = \frac{a_n}{n^d} \sum_{x \in \T_n^d} H_t (\tfrac{x}{n})  w_x G(\rho_*) 
\end{align*}
plus an error term satisfying the estimate in \eqref{error small}.  For the remaining term in $B_{n,2} (t)$, we write
\begin{align*}
&\frac{a_n}{n^d} \sum_{x \in \T_n^d} H_t (\tfrac{x}{n})  \sum_{y \sim x} w_x (1-\rho^n_x) [\eta_y - \rho^n_x]  \\
&= \frac{a_n}{n^d} \sum_{x \in \T_n^d} H_t (\tfrac{x}{n}) \sum_{y \sim x} w_x (1-\rho^n_x) [\eta_y - \rho^n_y] + \frac{a_n^2}{n^{2d+2}}\sum_{x \in \T_n^d} H_t (\tfrac{x}{n}) w_x (1-\rho^n_x) \Delta_n \rho (t,\tfrac{x}{n}).
\end{align*}
Note that the first term above can be dealt with by Proposition \ref{lem main lemma 2} and the second one has order $o(a_n^2/n^{d})$. Thus,
\begin{align*}
	\frac{n^d}{a_n^2} \int_0^t E_{\pi^n_s} \big[B_{n,2} (s) \big] ds &\leq \int_0^t E_{\pi^n_s} \Big[ \frac{1}{a_n} \sum_{x \in \T_n^d} G (\rho_*) w_x H_s (\tfrac{x}{n})\Big] ds \\
	&+ \frac{1}{4a_n} \int_0^t ds \Big\{ \int \Gamma_{n,s}^{H, {\rm ex}} (\sqrt{f^n_s}) d \nu^n_s \Big\} + \frac{Cn^d}{a_n^3} \int_0^t \mathcal{H}_n (s) ds + o_n (1). 
\end{align*}
To sum up, we have shown that
\begin{multline*}
	\frac{n^d}{a_n^2} \int_0^t E_{\pi^n_s} \big[B_{n} (s) \big] ds \leq \int_0^t E_{\pi^n_s} \Big[ \frac{1}{a_n} \sum_{x \in \T_n^d} w_x  \Big( F^\prime (\rho_*) \rho(s,\tfrac{x}{n}) + G (\rho_*)  H_s (\tfrac{x}{n}) \Big) \Big] ds \\
	+ \frac{1}{4} \Big(1+ \frac{1}{a_n}\Big) \int_0^t ds \Big\{ \int \Gamma_{n,s}^{H, {\rm ex}} (\sqrt{f^n_s}) d \nu^n_s \Big\} + \frac{Cn^d}{a_n^2} \Big(1+ \frac{1}{a_n}\Big) \int_0^t \mathcal{H}_n (s) ds + o_n (1). 
\end{multline*}

Adding up the above estimates, for $n$ large enough such that $1+ 1/a_n < 2$, we have
\begin{align*}
	\frac{n^d}{a_n^2} \mathcal{H}_n (t) \leq &\frac{n^d}{a_n^2} \mathcal{H}_n (0) +   \int_0^t E_{\pi^n_s} \Big[ \frac{1}{a_n} \sum_{x \in \T_n^d} w_x  \Big( \big( \Delta_n + F^\prime (\rho_*) \big) \rho (s,\tfrac{x}{n}) - 2 \chi (\rho_*) \Delta_n H_s (\tfrac{x}{n}) \\
	&+ G (\rho_*)  H_s (\tfrac{x}{n}) - \partial_s \rho(s,\tfrac{x}{n})\Big) \Big] ds  + 2C \int_0^t \frac{n^d}{a_n^2} \mathcal{H}_n (s) ds + o_n (1).
\end{align*}
Since $\rho (t,u)$ is the solution to \eqref{hydrodynamic equation}, together with Lemmas \ref{lem: super-esponential estimate 1} and Lemma \ref{lem 1}, the above time integral has order $o_n (1)$.  We finally conclude the proof of Proposition \ref{pro: relative entropy} by noting that $\mathcal{H}_n (0) = 0$ and by using Gr{\" o}nwall's inequality. 

\bibliographystyle{plain}
\bibliography{bibliography.bib}
\end{document}